\documentclass[11pt, reqno]{amsart}

\usepackage{amscd,amsmath}
\usepackage{mathrsfs}
\usepackage{amsfonts}
\usepackage{amssymb}
\usepackage{enumerate}
\usepackage{setspace}
\usepackage{color}
\usepackage{esint}
\usepackage{dsfont}

\usepackage{tabu}
\usepackage{tikz} 
\usepackage{setspace}
\usepackage[margin=2.2cm]{geometry}

\usepackage[english]{babel}
\usepackage{graphicx}

\usepackage{xspace}

\usepackage{datetime}

\definecolor{luh-dark-blue}{rgb}{0.0, 0.313, 0.608}

\usepackage[colorlinks=true, citecolor=blue]{hyperref}
\newtheorem{theorem}{Theorem}[section]
\newtheorem{lemma}{Lemma}[section]

\newtheorem{corollary}{Corollary}[section]

\newtheorem{remark}{Remark}[section]

\newcommand{\eqn}{\begin{eqnarray}}
\newcommand{\een}{\end{eqnarray}}

\numberwithin{equation}{section}

\DeclareMathOperator{\dv}{div}

\makeatletter
 \newcommand{\norm}{\@ifstar{\@normb}{\@normi}}
 \newcommand{\@normb}[2]{\left\Vert{#1}\right\Vert_{#2}}
 \newcommand{\@normi}[2]{\Vert{#1}\Vert_{#2}}
 \makeatother

\begin{document}
\newdateformat{mydate}{\THEDAY~\monthname~\THEYEAR}

\title[Generalized Incompressible model]{On a generalized incompressible model in two dimensions}

\author[Bae]{Hantaek Bae}
\address[Hantaek Bae]{\newline Department of Mathematical Sciences, Ulsan National Institute of Science and Technology (UNIST), Republic of Korea}
\email{hantaek@unist.ac.kr}

\author[Kang]{Kyungkeun Kang}
\address[Kyungkeun Kang]{\newline Department of Mathematics, Yonsei University,, Republic of Korea}
\email{kkang@yonsei.ac.kr}

\author[Kelliher]{James P Kelliher}
\address[James P Kelliher]{\newline Department of Mathematics, University of California, Riverside, USA}
\email{kelliher@math.ucr.edu}

\author[Lee]{Woojae Lee}
\address[Woojae Lee]{\newline Department of Mathematics, Yonsei University,, Republic of Korea}
\email{woori0108@yonsei.ac.kr}

\date{\today}
\keywords{Incompressible model; Finite-time blow-up; Global well-posedness, Asymptotic behavior}
\subjclass[2010]{35Q35;}

\begin{abstract}
We analyze a generalized incompressible model proposed by Ohkitani \cite{Ohkitani}. This model is based on the observation that the two-dimensional Burgers' equation can be related to the incompressible Navier-Stokes equations by rotating the velocity gradient by 90 degrees. We present several results with initial data in $H^{3}$. First of all, we examine the inviscid model and show the existence and uniqueness of a local-in-time solution that blows up in finite time if the initial vorticity  contains a negative part. In the presence of viscosity, we show the existence of a unique global-in-time solution without requiring a sign condition on the initial vorticity, establish the long-time behavior of the difference between two solutions, and derive temporal decay rates for the velocity field when the initial vorticity is non-positive.
\end{abstract}

\maketitle

\vspace{-4ex}

%%%%%%%%%%%%
\section{Introduction}
%%%%%%%%%%%%
We investigate a generalized incompressible model in $\mathbb{R}^2$ proposed in \cite{Ohkitani}  which provides a framework of  the transition from Burgers' equation to the incompressible Navier-Stokes equation, aiming to offer insights into the complex behavior of fluid flows. This generalization is achieved by rotating the gradient of the velocity of Burgers' equation, which allows for both theoretical and numerical analysis  of the interpolating equations. The resulting system is expected to be well-posed for a certain range of angle parameters; however, establishing this rigorously is nontrivial because the model lacks both the maximum principle available for Burgers' equation and the favorable vorticity structure inherent to the incompressible Navier–Stokes equations. Before presenting the model in \cite{Ohkitani}, we first recall some basic properties of these two equations. 

\vspace{1ex}

The incompressible Navier-Stokes equations are given by 
\eqn \label{NSE}
u_{t}  +u\cdot \nabla u +\nabla p-\nu\Delta u=0,\quad \dv u=0,
\een
where $u$ is the fluid velocity, $p$ is the pressure, and $\nu$ is the constant of viscosity. On the other hand, Burgers' equation is  given by 
\eqn \label{Burgers}
\overline{u}_{t}  +\overline{u}\cdot \nabla \overline{u} -\nu\Delta \overline{u}=0,
\een
where we use the same viscosity constant to link (\ref{Burgers}) to (\ref{NSE}) below.  When $\nu\geq0$, (\ref{NSE}) and (\ref{Burgers}) share the same local well-posedness result and the blow-up criterion in three dimensions.  More precisely, let $\mathcal{V}=u$ or $\mathcal{V}=\overline{u}$ with $\mathcal{V}_{0}\in H^3$. Then, there exists $T>0$ such that each system admits a unique solution  satisfying 
\[
\left\|\mathcal{V}(t)\right\|^{2}_{H^3}+\nu\int^{t}_{0}\left\|\nabla \mathcal{V}(s)\right\|^{2}_{H^3}ds\leq C \left\|\mathcal{V}_{0}\right\|^{2}_{H^3}
\]
on the time interval $[0,T)$. Moreover, the maximal existence time $T^{\ast}<\infty$ if and only if 
\eqn \label{Blowup criterion}
\limsup_{t\nearrow T^{\ast}}\int^{t}_{0}\left\|\mathcal{V}(t)\right\|^2_{L^{\infty}}dt=\infty.
\een
(Following \cite{BV, Majda}, we refer to $v$ as a strong solution and assume it throughout the paper.)  Since (\ref{Burgers}) satisfies the maximum principle $\|\overline{u}(t)\|_{L^{\infty}} \leq \|\overline{u}_{0}\|_{L^{\infty}}$, (\ref{Burgers}) is globally well-posed with initial data in $H^3$. In fact, we can reduce the required regularity of the initial data from $H^3$ to $H^{\frac{1}{2}}$ for (\ref{Burgers}) \cite{Pooley Robinson}. However, we do not know whether (\ref{Blowup criterion}) is excluded for (\ref{NSE}), mainly due to the presence of the pressure. (There are several modified models of (\ref{NSE}) in both directions showing the global well-posedness \cite{Chae, Pooley} or the finite time singularity formation \cite{Cheskidov, Friedlander Pavlovic, Gallagher Paicu, Montgomery Smith, Tao}.)

\vspace{1ex}

In two dimensions, both (\ref{NSE}) and (\ref{Burgers}) are globally well-posed with initial data in $H^3$ when $\nu > 0$. However, when $\nu = 0$, (\ref{NSE}) and (\ref{Burgers}) exhibit different behaviors. We note that (\ref{NSE}) remains globally well-posed with initial data in $H^3$ even when $\nu = 0$ which corresponds to the incompressible Euler equations. Let $\omega = \partial_1 u_2 - \partial_2 u_1$ be the vorticity of $u$. Since $\omega$ satisfies the following transport equation
\[
\omega_{t}+u\cdot \nabla \omega=0,
\]
$\omega$ satisfies the maximum principle $\|\omega(t)\|_{L^{\infty}}= \|\omega_{0}\|_{L^{\infty}}$. So, the global well-posedness of (\ref{NSE}) with $\nu=0$ can be obtained using the Beale-Kato-Majda criterion \cite{BKM}. In contrast, the vorticity $\overline{\omega}=\partial_1 \overline{u}_2 - \partial_2 \overline{u}_1$ of  (\ref{Burgers}) satisfies 
\[
\overline{\omega}_{t}+\overline{u}\cdot \nabla \overline{\omega}=-(\dv \overline{u})\overline{\omega}
\]
and thus $\overline{\omega}$ fails to satisfy the maximum principle. In fact, finite-time singularity formation occurs for (\ref{Burgers}) when $\nu=0$ (see the Appendix of \cite{Ohkitani}).

\vspace{1ex}

While (\ref{NSE}) and (\ref{Burgers}) have been compared in the literature, both theoretically in \cite{kiselev1957existence, Pooley Robinson} and numerically in \cite{ohkitani2012numerical}, a continuous interpolation between the two systems via an explicit transformation does not appear to have been previously established. In this direction, Ohkitani introduces a generalized incompressible model  \cite{Ohkitani} as follows. Firstly, to reformulate (\ref{Burgers}) as an incompressible equation like (\ref{NSE}), we consider (\ref{Burgers}) under the assumption of potential flow, where the velocity field is defined as $\overline{u} = \nabla \psi$. Since 
\[
\overline{u}\cdot \nabla \overline{u}=\frac{1}{2}\nabla|\overline{u}|^2+\overline{\omega} \begin{pmatrix}
-\overline{u}_2 \\
\overline{u}_1
\end{pmatrix}, \quad \overline{\omega} = \partial_1 \overline{u}_2 - \partial_2 \overline{u}_1=\partial_{1}\partial_{2}\psi-\partial_{2}\partial_{1}\psi=0,
\]
$\psi$ (at least formally) satisfies 
\[
\psi_{t}+\frac{|\nabla \psi|^2}{2}-\nu\Delta \psi=C(t)
\]
for some function $C(t)$. Let $\widetilde{u}=\nabla^{\perp}\psi$, where $\nabla^{\perp}=(-\partial_{2}, \partial_{1})$. Since 
\[
\nabla^{\perp}\frac{|\nabla \psi|^2}{2}=\nabla^{\perp}\frac{|\nabla^{\perp} \psi|^2}{2}=\nabla^{\perp}\psi \cdot \nabla^{\perp} (\nabla^{\perp}\psi),
\]
$\widetilde{u}$ satisfies  
\eqn \label{Incompressible Burgers eq}
\widetilde{u}_{t}  +\widetilde{u}\cdot \nabla^{\perp} \widetilde{u}-\nu\Delta \widetilde{u}=0, \quad \dv \widetilde{u}=0.
\een

\vspace{1ex}

By interpolating (\ref{NSE}) and (\ref{Incompressible Burgers eq}), the following model is proposed in \cite{Ohkitani}: 
\eqn \label{Eq of v}
v_{t}+\mathbb{P}\left(v\cdot \mathcal{D}^{\alpha}v\right)-\nu\Delta v=0,\quad \dv v=0, \quad 0 < \alpha < \frac{\pi}{2}
\een
where $\mathbb{P}$ is the Leray projection operator onto the space of divergence-free vector fields  and 
\[
\mathcal{D}^{\alpha}=\mathcal{R}_{\alpha}\nabla =\begin{pmatrix}
\cos\alpha &-\sin\alpha\\
\sin\alpha&\cos\alpha
\end{pmatrix} \begin{pmatrix}
\partial_{1}\\
\partial_{2}
\end{pmatrix}=\begin{pmatrix}
[\cos\alpha]\partial_{1}-[\sin\alpha]\partial_{2}\\
[\sin\alpha]\partial_{1}+[\cos\alpha]\partial_{2}
\end{pmatrix}.
\]
It is clear that $\alpha=0$ corresponds to (\ref{NSE}) and $\alpha=\frac{\pi}{2}$  to (\ref{Incompressible Burgers eq}). The well-posedness results discussed above and the numerical simulations performed in \cite{Ohkitani} lead to two questions: first, whether local-in-time solutions develop finite-time singularities when $\nu = 0$; and second, whether \eqref{Eq of v} is globally well-posed for $\nu > 0$.  In this paper, we address both questions analytically with initial data in $H^{3}$ where we choose this space to handle the vorticity $\omega$ in $L^{\infty}$. 

\vspace{1ex}

This paper is organized as follows. After providing preliminary material in Section \ref{sec:2}, we investigate the inviscid case ($\nu=0$) in Section \ref{sec:3} to show two results: the existence of a unique local-in-time solution (Theorem \ref{LWP theorem}) and the finite-time blow-up of the solution under the condition that the initial vorticity is non-negative at least at one point (Theorem \ref{Blow-up Theorem}). Sections \ref{sec:4} and \ref{sec:5} are devoted to the viscous case ($\nu>0$). In Section \ref{sec:4}, we prove the existence of a unique global-in-time  solution without assuming a sign condition on the vorticity (Theorem \ref{GWP theorem}), and we establish the long-time behavior of the difference $v_{1}-v_{2}$, where $v_{1}$ and $v_{2}$ are the solutions corresponding to $\alpha_{1}$ and $\alpha_{2}$, respectively (Theorem \ref{Theorem Difference bound}). Finally, in Section \ref{sec:5}, we derive temporal decay rates of the global solutions constructed in Section \ref{sec:4} under the assumption that the initial vorticity is non-positive and the initial velocity belongs to $H^{3}\cap L^1$ (Theorem \ref{L2 Decay theorem}) or to $H^{3}\cap \dot{H}^{-\sigma}$ (Theorem \ref{L2 Decay theorem 2}).

%%%%%%%%%%%%%%
\section{Preliminaries}\label{sec:2}
%%%%%%%%%%%%%%
All constants will be denoted by $C$, and we follow the convention that such constants may vary from line to line, or even between two occurrences within the same expression. We also use a simplified notation for integrals over the spatial variables:
\[
\int f=:\int_{\mathbb{R}^{2}}f(x)dx.
\]

%The space $L^{p}(\mathbb{R}^2)$, $1\leq p\leq \infty$, consists of measurable functions equipped with the following norm
%\[
%\left\|f\right\|_{L^{p}}=\left(\int_{\mathbb{R}^2} |f(x)|^{p}\,dx\right)^{\frac{1}{p}} \quad \text{when $1\leq p<\infty$},\quad \left\|f\right\|_{L^{\infty}}=\esssup_{x\in\mathbb{R}^2} |f(x)|. 
%\]

Let $f$ be a scalar function in $\mathbb{R}^{2}$. (When $F$ is a vector in $\mathbb{R}^{2}$, we apply the following definitions to  each component of $F$.) Give a multi-index $\beta=(\beta_{1}, \beta_{2})$, we define 
\[
\nabla^{\beta}f=\frac{\partial^{|\beta|}f}{\partial^{\beta_{1}}_{1} \partial^{\beta_{2}}_{2}}, \quad |\beta|=\beta_{1}+ \beta_{2}.
\]
When $k$ is a nonnegative integer, we set $D^{k}f=\left\{\nabla^{\beta}f: |\beta|=k\right\}$.  For $k\in \mathbb{N}$, $H^{k}$ and $\dot{H}^k$ denote the inhomogeneous and homogeneous Sobolev spaces equipped with the following norm and semi-norm, respectively 
\[
\|f\|^{2}_{H^{k}}=\|f\|^{2}_{L^{2}}+\sum^{k}_{i=1}\left\|D^{i} f\right\|^{2}_{L^{2}}, \quad \quad \|f\|_{\dot{H}^k}=\left\|D^{k} f\right\|_{L^{2}}.
\]
 $\Lambda^s$ stands for the Fourier multiplier operator given by $|\xi|^s$. This definition allows the Sobolev index to be extended to real (in particular, negative) values $s\in\mathbb{R}$ by
\[
 \|f\|_{\dot{H}^s}=\left\|\Lambda^{s} f\right\|_{L^{2}}.
\]
We now recall some inequalities.

\begin{enumerate}[]
\item \textbullet \ Gronwall's inequality \cite[Page 624]{Evans}: Let $\eta$ be a nonnegative, absolutely continuous function on $[0,T]$ satisfying, for a.e. $t$, the differential inequality $
\eta'(t) \leq \phi(t)\eta(t) +\psi(t)$, where $\phi(t)$ and $\psi(t)$ are nonnegative  and integrable functions on $[0,T]$. Then, 
\[%\label{Gronwall}
\eta(t) \leq \left(\eta(0)+\int^{t}_{0}\psi(\tau)d\tau\right)\exp\left[\int^{t}_{0}\phi(\tau)d\tau\right], \quad 0\leq t\leq T.
\]%\een
\item \textbullet \ Ladyzhenskaya  and Sobolev inequalities \cite{Ladyzhenskaya, Nirenberg}: 
\begin{subequations} \label{Lp bounds}
\begin{align}
&\left\|f\right\|_{L^{4}}\leq C \left\|f\right\|^{\frac{1}{2}}_{L^{2}}\left\|f\right\|^{\frac{1}{2}}_{\dot{H}^{1}},\label{L4 bound}\\
&\|f\|_{L^{\infty}}\leq C \|f\|_{H^{2}}. \label{L infty bound}
\end{align}
\end{subequations}
\item \textbullet \ Interpolations \cite{Nirenberg}: for $0<s_{0}<s<s_{1}$,
\eqn \label{Interpolation}
\left\|D^{s}f\right\|_{L^{2}}\leq C \left\|D^{s_{0}}f\right\|^{\theta}_{L^{2}} \left\|D^{s_{1}}f\right\|^{1-\theta}_{L^{2}}, \quad \theta=\frac{s_{1}-s}{s_{1}-s_{0}}.
\een
\item \textbullet \ Product rules: (i)  \cite{Folland}: for $k>1$
\eqn
\|fg\|_{H^{k}}\leq C \|f\|_{H^{k}}\|g\|_{H^{k}}, \label{Product estimates}
\een
and (ii) \cite{Kato} for $1< p<\infty$ and $p_{i}, q_{i} \ne 1$, $i=1,2$,
\eqn \label{Product estimates 2}
\left\|D^{s}(fg)\right\|_{L^{p}} \le C \left(\left\|D^{s}f\right\|_{L^{p_{1}}} \|g\|_{L^{q_{1}}} +\|f\|_{L^{p_{2}}} \left\|D^{s}g\right\|_{L^{q_{2}}} \right), \quad \frac{1}{p}=\frac{1}{p_{1}}+\frac{1}{q_{1}}=\frac{1}{p_{2}}+\frac{1}{q_{2}}.
\een 
\end{enumerate}

We also recall the following commutator estimates.

\begin{lemma} \cite[Corollary 5.3]{Li} \label{Commutator lemma}\upshape
Let $[D^s,f]g=D^s(fg)-fD^s g$ with $s\geq 1$. For $1<p<\infty$ and $1<p_{1},p_{2},p_{3},p_{4}\leq\infty$,
\[
\left\|[D^s,f]g\right\|_{L^{p}}\leq C \left\|D^{s} f\right\|_{L^{p_{1}}}\left\|g\right\|_{L^{p_{2}}}+C\left\|\nabla f\right\|_{L^{p_{3}}}\left\|D^{s-1}g\right\|_{L^{p_{4}}}, \quad \frac{1}{p}= \frac{1}{p_{1}}+\frac{1}{p_{2}}=\frac{1}{p_{3}}+\frac{1}{p_{4}}.
\]
\end{lemma}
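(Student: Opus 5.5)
We will prove the estimate by Bony's paraproduct decomposition, with $D^s$ read as the Fourier multiplier $\Lambda^s$. The integer case $D^k$, understood componentwise, is handled by the same argument after replacing the symbol $|\xi|^s$ by $(i\xi)^\beta$ with $|\beta|=k$; the only property of the symbol used below is that it is smooth and homogeneous of degree $s$ away from the origin.

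Let $\Delta_j$ be Littlewood–Paley projections and $S_{j}=\sum_{k\le j-3}\Delta_k$. Write
\[
T_a b=\sum_j S_{j}a\,\Delta_j b,\qquad R(a,b)=\sum_{|j-k|\le 2}\Delta_j a\,\Delta_k b,
\]
so that $fg=T_fg+T_gf+R(f,g)$. Then the commutator splits into five pieces:
\[
[D^s,f]g=[D^s,T_f]g+D^s(T_gf)+D^sR(f,g)-T_{D^sg}f-R(f,D^sg).
\]

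The four pieces other than $[D^s,T_f]g$ are treated as bilinear Fourier multipliers in which $D^s$ is moved onto $f$.
\begin{itemize}
\item In $D^s(T_gf)$ the output frequency is comparable to the frequency $\xi$ of $f$. Hence $D^s(T_gf)=\mathcal{B}(D^sf,g)$ with bilinear symbol $|\xi+\eta|^s|\xi|^{-s}$ restricted to $|\eta|\ll|\xi|$. This symbol is smooth and of Coifman–Meyer type.
\item In $T_{D^sg}f$ the symbol $|\eta|^s|\xi|^{-s}$ on $|\eta|\ll|\xi|$ is likewise a Coifman–Meyer symbol, so again this term is a bilinear operator applied to $(D^sf,g)$.
\item The two remainder terms have comparable input frequencies $|\xi|\sim|\eta|$, so $|\eta|^s/|\xi|^s$ and $|\xi+\eta|^s/|\xi|^s$ are harmless. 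This step uses $s>0$ and $p<\infty$ to sum the dyadic pieces whose output frequency lies below the input frequency.
\end{itemize}
The Coifman–Meyer theorem then bounds all four pieces by $C\|D^sf\|_{L^{p_1}}\|g\|_{L^{p_2}}$. It allows one of $p_1,p_2$ to equal $\infty$ since the target satisfies $1<p<\infty$.

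The genuine commutator $[D^s,T_f]g=\sum_j[D^s,S_{j}f]\Delta_jg$ has symbol $|\xi+\eta|^s-|\xi|^s$ on the region $|\eta|\ll|\xi|$, where $\eta$ is the frequency of $f$. By Taylor's formula this equals
\[
\eta\cdot\int_0^1\nabla|\cdot|^s(\xi+\tau\eta)\,d\tau .
\]
Since $|\xi+\tau\eta|\sim|\xi|$ there, the integrand is $|\xi|^{s-1}$ times a symbol homogeneous of degree $0$ and smooth on the region. Therefore $[D^s,T_f]g$ is a Coifman–Meyer bilinear operator applied to $(\nabla f, D^{s-1}g)$. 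This gives the bound $C\|\nabla f\|_{L^{p_3}}\|D^{s-1}g\|_{L^{p_4}}$ and needs $s\ge1$ only so that $D^{s-1}$ is a nonnegative-order operator. Adding the five pieces gives the lemma.

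The main obstacle is the endpoint exponents $p_i=\infty$. There Riesz transforms are unbounded, so we cannot freely convert between $\nabla$, $D^s$ and $\Lambda^s$ or move derivatives across by $L^p$-multiplier theory. We avoid this by never applying a linear multiplier to an $L^\infty$ factor. Each frequency ratio is absorbed into the bilinear symbol, and we invoke the bilinear Coifman–Meyer theorem with one $L^\infty$ input, checking in each region that the rescaled symbol satisfies the required smoothness bounds $|\partial_\xi^a\partial_\eta^b\sigma|\lesssim(|\xi|+|\eta|)^{-|a|-|b|}$.
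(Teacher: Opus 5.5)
The paper gives no proof of this lemma; it quotes it from Li, so your paraproduct/Coifman--Meyer argument is a self-contained alternative. Most of it holds up. The pieces $D^s(T_gf)$ and $R(f,D^sg)$ have genuinely smooth Coifman--Meyer symbols, since there $|\xi+\eta|\sim|\xi|$, respectively $|\eta|\sim|\xi|$, away from the origin. The remainder $D^sR(f,g)$ can be summed dyadically as you indicate. The Taylor-formula treatment of $[D^s,T_f]g$ is correct, because $|\xi+\tau\eta|\sim|\xi|$ on its support.

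\textbf{The gap: the term $T_{D^sg}f=\sum_j S_j(D^sg)\,\Delta_jf$.} Its symbol $|\eta|^s|\xi|^{-s}\sum_j\psi(2^{-j}\xi)\phi(2^{3-j}\eta)$ has low frequency $\eta$ ranging over a full neighbourhood of $\eta=0$. There $|\eta|^s$ is not smooth unless $s$ is an even integer. Derivatives $\partial_\eta^b$ with $|b|>s$ are homogeneous of degree $s-|b|$, so they are of size $(|\eta|/|\xi|)^{s-|b|}|\xi|^{-|b|}$. This is unbounded relative to $(|\xi|+|\eta|)^{-|b|}$ as $\eta\to0$. Equivalently, the kernel of $S_jD^s$ decays only like $|x|^{-2-s}$, not rapidly. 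So the Coifman--Meyer theorem in the form you invoke does not apply. Your closing claim that the symbol bounds are checked in every region fails exactly in this low-frequency region, which is where the endpoint difficulty sits.

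\textbf{How to repair it.} The bound for this term is still true, by a direct square-function argument.
\begin{enumerate}
\item Each $S_j(D^sg)\Delta_jf$ has Fourier support in an annulus $|\zeta|\sim2^j$ and $1<p<\infty$. Hence $\|T_{D^sg}f\|_{L^p}\lesssim\|(\sum_j|S_j(D^sg)|^2|\Delta_jf|^2)^{1/2}\|_{L^p}$.
\item The kernel of $2^{-js}S_jD^s$ is bounded by $C2^{2j}(1+2^j|x|)^{-2-s}$. This is a radially decreasing majorant with $j$-independent $L^1$ norm, which is where $s>0$ is used. Therefore $|S_j(D^sg)|\lesssim 2^{js}Mg$ pointwise.
\item If $p_1<\infty$ (including $p_2=\infty$): write $2^{js}\Delta_jf=\widetilde\Delta_jD^sf$ with a modified annular projection. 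H\"older then bounds the square function by $\|Mg\|_{L^{p_2}}\|(\sum_j|\widetilde\Delta_jD^sf|^2)^{1/2}\|_{L^{p_1}}\lesssim\|D^sf\|_{L^{p_1}}\|g\|_{L^{p_2}}$.
\item If $p_1=\infty$: put the supremum on $f$ instead, using $\|2^{js}\Delta_jf\|_{L^\infty}\lesssim\|D^sf\|_{L^\infty}$. Write $2^{-js}S_j(D^sg)=\sum_{k\le j-3}2^{(k-j)s}\widetilde\Delta_kg$. By Young's inequality on $\mathbb{Z}$, its $\ell^2_j$-norm is pointwise $\lesssim(\sum_k|\widetilde\Delta_kg|^2)^{1/2}$, whose $L^{p}$ norm is $\lesssim\|g\|_{L^p}$.
\end{enumerate}
With this replacement your proof goes through.

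The paper actually uses the lemma with $D^k$ meaning all partial derivatives of integer order $k$. In that reading the low-frequency symbol is the polynomial $(i\eta)^\beta$, which does satisfy the Coifman--Meyer bounds. So for the case the paper needs, your argument is complete as written.
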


Since the role of vorticity $\omega=\partial_1 v_2 - \partial_2 v_1$ is critical in this paper, we derive the governing equation for the vorticity before presenting our main results. We first note that
\eqn \label{New advection term}
v\cdot \mathcal{D}^{\alpha}v=\mathcal{R}_{-\alpha}(v)\cdot \nabla v,
\een
where
\eqn \label{Rotation operator}
\mathcal{R}_{-\alpha}(v)=\begin{pmatrix}
\cos\alpha &\sin\alpha\\
-\sin\alpha&\cos\alpha
\end{pmatrix} \begin{pmatrix}
v_{1}\\
v_{2}
\end{pmatrix}=\begin{pmatrix}
[\cos\alpha]v_{1}+[\sin\alpha]v_{2}\\
[-\sin\alpha]v_{1}+[\cos\alpha] v_{2}
\end{pmatrix}.
\een
By  taking the curl operator to \eqref{New advection term}, we have  
\[
\nabla \times \left(v\cdot \mathcal{D}^{\alpha}v\right)=\nabla \times \left(\mathcal{R}_{-\alpha}(v)\cdot \nabla v\right)=\mathcal{R}_{-\alpha}(v)\cdot \nabla \omega+\partial_{1}\mathcal{R}_{-\alpha}(v)\cdot \nabla v_{2}-\partial_{2}\mathcal{R}_{-\alpha}(v)\cdot \nabla v_{1}
\]
with 
\[
\begin{split}
&\partial_{1}\mathcal{R}_{-\alpha}(v)\cdot \nabla v_{2}-\partial_{2}\mathcal{R}_{-\alpha}(v)\cdot \nabla v_{1}\\
&=\partial_{1}\mathcal{R}_{-\alpha}(v)_{1} \partial_{1}v_{2}+\partial_{1}\mathcal{R}_{-\alpha}(v)_{2} \partial_{2}v_{2}-\partial_{2}\mathcal{R}_{-\alpha}(v)_{1} \partial_{1}v_{1}-\partial_{2}\mathcal{R}_{-\alpha}(v)_{2} \partial_{2}v_{1}\\
&=[\cos\alpha] \left[\partial_{1}v_{1}\partial_{1}v_{2}+\partial_{1}v_{2}\partial_{2}v_{2}-\partial_{1}v_{1}\partial_{2}v_{1}-\partial_{2}v_{1}\partial_{2}v_{2}\right]+[\sin\alpha] \left[\left(\partial_{1}v_{2}\right)^2+\left(\partial_{2}v_{1}\right)^2-2\partial_{1}v_{1}\partial_{2}v_{2}\right]\\
&=[\cos\alpha] \left[\partial_{1}v_{1}\left(\partial_{1}v_{2}-\partial_{2}v_{1}\right)+\partial_{2}v_{2}\left(\partial_{1}v_{2}-\partial_{2}v_{1}\right)\right]+[\sin\alpha] \left[\left(\partial_{1}v_{2}\right)^2+\left(\partial_{2}v_{1}\right)^2-2\partial_{1}v_{1}\partial_{2}v_{2}\right]\\
&=[\cos\alpha](\dv v)\omega+[\sin \alpha] \left[\left(\partial_{1}v_{2}\right)^2+\left(\partial_{2}v_{1}\right)^2+\left(\partial_{1}v_{1}\right)^2+\left(\partial_{2}v_{2}\right)^2\right]=(\sin\alpha)|\nabla v|^2,
\end{split}
\]
where we use $\dv v=0$ to have the last equality. Thus, $\omega$ satisfies
\eqn \label{Eq of omega}
\omega_{t}+\mathcal{R}_{-\alpha}(v)\cdot \nabla \omega-\nu\Delta \omega=-[\sin\alpha]|\nabla v|^2.
\een
This is equivalent to \cite[Eq (10)]{Ohkitani}
\[
\omega_{t}+[\sin\alpha]\Delta \frac{|v|^2}{2}+[\cos\alpha]v\cdot \nabla \omega-\nu\Delta \omega=0,
\]
but we find that (\ref{Eq of omega}) is more useful than this for proving our results. We finally notice  that 
\eqn\label{div curl}
\dv \mathcal{R}_{-\alpha}(v)=[\sin\alpha]\omega.
\een

%%%%%%%%%%%%%%%%%%%%%%%%%
\section{Inviscid case} \label{sec:3}
%%%%%%%%%%%%%%%%%%%%%%%%%
We recall (\ref{Eq of v}) with $\nu=0$: 
\eqn \label{Inv Eq of v}
v_{t}+\mathbb{P}\left(\mathcal{R}_{-\alpha}(v)\cdot \nabla v\right)=0,
\een
where we use (\ref{New advection term}) to express the nonlinear term.

\begin{theorem} \label{LWP theorem}\upshape
Let $v_{0}\in H^{3}$ with $\dv v_{0}=0$. Then,  there exists a unique solution $v\in C([0,T);H^3)$ of (\ref{Inv Eq of v}) satisfying the following bound:
\eqn \label{Inviscid Bound}
\left\|v(t)\right\|_{H^{3}}\leq \frac{\left\|v_{0}\right\|_{H^{3}}}{1-Ct\left\|v_{0}\right\|_{H^{3}}}, \quad 0\leq t<T=\frac{1}{C\left\|v_{0}\right\|_{H^{3}}}.
\een
Moreover, the maximal existence time $T^{\ast}<\infty$ if and only if 
\[
\limsup_{t\nearrow T^{\ast}}\int^{t}_{0}\left\|\omega(\tau)\right\|_{L^{\infty}}d\tau=\infty.
\]
\end{theorem}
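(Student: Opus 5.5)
The proof follows the standard energy-method approach for the Euler equations, adapted to the fact that the transporting field $\mathcal{R}_{-\alpha}(v)$ is not divergence free; by (\ref{div curl}) its divergence is $[\sin\alpha]\omega$.

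\textbf{A priori $H^3$ estimate.} For $0\le k\le 3$ apply $D^k$ to (\ref{Inv Eq of v}) and pair with $D^k v$. Since $\dv D^kv=0$, the projection $\mathbb{P}$ can be dropped in the pairing. Write
\[
D^k\bigl(\mathcal{R}_{-\alpha}(v)\cdot\nabla v\bigr)=\mathcal{R}_{-\alpha}(v)\cdot\nabla D^kv+[D^k,\mathcal{R}_{-\alpha}(v)]\nabla v .
\]
For the first term, integrate by parts using (\ref{div curl}):
\[
\int \mathcal{R}_{-\alpha}(v)\cdot\nabla D^kv\cdot D^kv=-\tfrac{\sin\alpha}{2}\int\omega|D^kv|^2 ,
\]
which is bounded by $C\|\omega\|_{L^\infty}\|v\|^2_{H^3}$. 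For the commutator, Lemma \ref{Commutator lemma} with $p=2$, $p_1=p_4=2$, $p_2=p_3=\infty$ gives the bound
\[
C\|\nabla v\|_{L^\infty}\|D^kv\|_{L^2},
\]
because $\mathcal{R}_{-\alpha}$ is a constant matrix and so $\|D^k\mathcal{R}_{-\alpha}(v)\|=\|D^kv\|$. Summing over $k$ yields
\[
\frac{d}{dt}\|v\|^2_{H^3}\le C\|\nabla v\|_{L^\infty}\|v\|^2_{H^3}.
\]
By (\ref{L infty bound}), $\|\nabla v\|_{L^\infty}\le C\|v\|_{H^3}$, so $\frac{d}{dt}\|v\|_{H^3}\le C\|v\|^2_{H^3}$. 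Integrating this Riccati-type inequality gives exactly (\ref{Inviscid Bound}) on $[0,T)$ with $T=1/(C\|v_0\|_{H^3})$.

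\textbf{Existence and uniqueness.} Construct approximations, for example by mollification of the nonlinearity or by Friedrichs' scheme. Apply $J_\epsilon$ to the equation and use $J_\epsilon\mathbb{P}(\mathcal{R}_{-\alpha}(J_\epsilon v)\cdot\nabla J_\epsilon v)$; this is an ODE on $H^3$, so Picard's theorem applies. The above estimate holds uniformly in $\epsilon$, since $J_\epsilon$ commutes with derivatives and with $\mathbb{P}$. Next show the family is Cauchy in $C([0,T'];L^2)$. Taking the difference $w$ of two solutions (or two approximations) and pairing with $w$ gives
\[
\frac{d}{dt}\|w\|^2_{L^2}\le C\bigl(\|\nabla v_1\|_{L^\infty}+\|\nabla v_2\|_{L^\infty}+\|\omega\|_{L^\infty}\bigr)\|w\|^2_{L^2},
\]
plus an $O(\epsilon)$ term in the approximation case. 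The bilinear term $\mathcal{R}_{-\alpha}(w)\cdot\nabla v_2$ is bounded by $\|\nabla v_2\|_{L^\infty}\|w\|_{L^2}$, and the transport term again produces only $\omega$ through (\ref{div curl}). Gronwall's inequality then gives uniqueness and convergence. Interpolation (\ref{Interpolation}) upgrades convergence to $C([0,T');H^{s})$ for $s<3$, and weak-$*$ compactness puts the limit in $L^\infty H^3$. Continuity in time with values in $H^3$ follows by the usual argument: weak continuity plus continuity of the norm, the latter obtained from time-reversibility of the energy inequality.

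\textbf{Blow-up criterion (main obstacle).} One direction is immediate: if $T^\ast<\infty$ then $\|v\|_{H^3}$ must blow up, so the goal is to show that a finite $\int_0^{T^\ast}\|\omega\|_{L^\infty}$ keeps $\|v\|_{H^3}$ bounded. The difficulty is that the energy inequality involves $\|\nabla v\|_{L^\infty}$, not $\|\omega\|_{L^\infty}$, and the vorticity equation (\ref{Eq of omega}) with $\nu=0$ carries the source $-[\sin\alpha]|\nabla v|^2$. I would use the Beale--Kato--Majda logarithmic inequality
\[
\|\nabla v\|_{L^\infty}\le C\bigl(1+\|v\|_{L^2}+\|\omega\|_{L^\infty}\log(e+\|v\|_{H^3})\bigr),
\]
which holds for divergence-free $v$ since $\nabla v$ is a Calder\'on--Zygmund transform of $\omega$. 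With $y=\log(e+\|v\|^2_{H^3})$ this gives
\[
y'\le C(1+\|v\|_{L^2}+\|\omega\|_{L^\infty})\,y .
\]
The $L^2$ norm needs separate control, because $\frac{d}{dt}\|v\|^2_{L^2}=\sin\alpha\int\omega|v|^2$; this yields $\|v(t)\|_{L^2}\le\|v_0\|_{L^2}\exp\bigl(C\int_0^t\|\omega\|_{L^\infty}\bigr)$. Gronwall's inequality applied to $y$ then bounds $\|v\|_{H^3}$ by a double exponential of $\int\|\omega\|_{L^\infty}$. Hence the solution can be continued past $T^\ast$ via the local theory, which is a contradiction. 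The delicate point to verify is the logarithmic estimate: it requires a frequency splitting of $\nabla v=\nabla\nabla^{\perp}\Delta^{-1}\omega$ into low, middle and high frequencies, controlled respectively by $\|v\|_{L^2}$, by $\|\omega\|_{L^\infty}\log$, and by $\|v\|_{H^3}$.
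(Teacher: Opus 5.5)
Your proposal is correct and follows essentially the paper's proof. It uses the same commutator-plus-divergence-identity estimate $\frac{d}{dt}\|v\|_{H^3}^2\le C\|\nabla v\|_{L^\infty}\|v\|_{H^3}^2$ leading to the Riccati bound, obtains existence via the Majda--Bertozzi/Friedrichs scheme that the paper only cites, and closes the continuation criterion with the Beale--Kato--Majda logarithmic inequality. The only (harmless) variation is the low-frequency term: you control $\|v\|_{L^2}$ through the energy identity $\frac{d}{dt}\|v\|_{L^2}^2=[\sin\alpha]\int\omega|v|^2$, whereas the paper uses $\|\omega\|_{L^2}$ estimated from the vorticity equation; both are bounded by $\exp\bigl(C\int_0^t\|\omega\|_{L^\infty}\bigr)$, so either works.
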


\begin{proof}
To prove Theorem \ref{LWP theorem}, we focus on deriving a uniform bound for $\|v\|_{H^{3}}$: the existence of a unique solution can be established by applying the methods in \cite{BV, Majda}. By applying the derivative $D^k$, $k=0,1,2,3$, to (\ref{Inv Eq of v}) and taking $L^2$ inner product of the resulting equation with $D^{k}v$, we have 
\eqn \label{energy estimates 1}
\begin{split}
\frac{1}{2}\frac{d}{dt}\left\|v\right\|^{2}_{H^{3}}&=-\sum^{3}_{k=0}\int D^{k}(\mathcal{R}_{-\alpha}(v) \cdot \nabla v)\cdot D^{k}v\\
&=-\sum^{3}_{k=0}\int \left(\left[D^{k}, \mathcal{R}_{-\alpha}(v)\right]\cdot \nabla v\right)\cdot D^{k}v -\sum^{3}_{k=0}\int \left(\mathcal{R}_{-\alpha}(v)\cdot \nabla D^{k}v\right) \cdot D^{k}v\\
&=-\sum^{3}_{k=0}\int \left(\left[D^{k}, \mathcal{R}_{-\alpha}(v)\right]\cdot \nabla v\right)\cdot D^{k}v  +\frac{1}{2}\sum^{3}_{k=0}\int \dv \mathcal{R}_{-\alpha}(v) \left|D^{k}v\right|^{2}\\
&=-\sum^{3}_{k=0}\int \left(\left[D^{k}, \mathcal{R}_{-\alpha}(v)\right]\cdot \nabla v\right)\cdot D^{k}v  +\frac{[\sin\alpha]}{2}\sum^{3}_{k=0}\int \omega \left|D^{k}v\right|^{2},
\end{split}
\een
where we use (\ref{div curl}) to the last term on the right-hand side of (\ref{energy estimates 1}). By applying Lemma \ref{Commutator lemma}, we obtain
\eqn \label{Bound of v time derivative 1}
\frac{d}{dt}\left\|v\right\|^{2}_{H^{3}}\leq C\left\|\nabla v\right\|_{L^{\infty}}\left\|v\right\|^{2}_{H^{3}},
\een
where we use that $|D^k\mathcal{R}_{-\alpha}(v)|\leq C |D^{k}v|$ holds pointwise. Since $\left\|\nabla v\right\|_{L^{\infty}}\leq C \left\|v\right\|_{H^{3}}$ by (\ref{L infty bound}), we can deduce the following inequality: 
\[
\frac{d}{dt}\left\|v\right\|_{H^{3}}\leq C\left\|v\right\|^{2}_{H^{3}}.
\]
By solving this ODE inequality, we obtain (\ref{Inviscid Bound}). Moreover, by applying the classical Beale-Kato-Majda criterion \cite{BKM} to (\ref{Bound of v time derivative 1}), we have  
\[
\frac{d}{dt}\left\|v\right\|^{2}_{H^{3}}\leq C\left[1+ \left(1+\log^+ \left\|v\right\|_{H^{3}}\right) \left\|\omega\right\|_{L^{\infty}}+\left\|\omega\right\|_{L^{2}}\right] \left\|v\right\|^{2}_{H^{3}}.
\]
Moreover, 
\[
\frac{1}{2}\frac{d}{dt}\left\|\omega\right\|^{2}_{L^{2}}=[\sin\alpha]\int\omega^3 -\int [\sin\alpha]|\nabla v|^2\omega \leq C \left\|\omega\right\|_{L^{\infty}}\left\|\omega\right\|^{2}_{L^{2}},
\]
where we use $\left\|\nabla v\right\|_{L^{2}}\leq C \left\|\omega\right\|_{L^{2}}$ because $\dv v=0$ \cite{Stein}. From these two inequalities, we conclude that the maximal existence time $T^{\ast}<\infty$ if and only if 
\[
\limsup_{t\nearrow T^{\ast}}\int^{t}_{0}\left\|\omega(\tau)\right\|_{L^{\infty}}d\tau=\infty.
\]
This completes the proof of Theorem \ref{LWP theorem}. 
\end{proof}

To show that the solution from Theorem \ref{LWP theorem} fails to exist globally-in-time, we impose a sign condition on the initial vorticity.

\begin{theorem} \label{Blow-up Theorem}\upshape
Let $v_{0}\in H^{3}$ with $\dv v_{0}=0$. Then, the solution constructed in Theorem \ref{LWP theorem} cannot exist globally in time if $\displaystyle \min_{x\in\mathbb{R}^2}\omega_{0}(x)<0$. Moreover, the blow-up time $T^*$ is bounded above as follows:
\eqn \label{upper bound of T}
 T^*\leq -\frac{2}{[\sin\alpha]\displaystyle \min_{x\in\mathbb{R}^2}\omega_{0}(x)}.
\een
\end{theorem}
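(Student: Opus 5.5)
The plan is to follow the minimum of the vorticity along the characteristics of the transport field $\mathcal{R}_{-\alpha}(v)$ and show that it satisfies a Riccati-type inequality. With $\nu=0$, equation (\ref{Eq of omega}) reads
\[
\omega_{t}+\mathcal{R}_{-\alpha}(v)\cdot \nabla \omega=-[\sin\alpha]|\nabla v|^2 .
\]
We need a pointwise lower bound of $|\nabla v|^2$ in terms of $\omega^2$. Since $\dv v=0$, write $\partial_1 v_1=-\partial_2 v_2=a$. Then
\[
|\nabla v|^2=2a^2+(\partial_1v_2)^2+(\partial_2v_1)^2\geq \tfrac12(\partial_1v_2-\partial_2v_1)^2=\tfrac12\omega^2 ,
\]
using $b^2+c^2\geq \frac12(b-c)^2$. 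Hence
\[
\omega_t+\mathcal{R}_{-\alpha}(v)\cdot\nabla\omega\leq -\tfrac{\sin\alpha}{2}\,\omega^2 .
\]

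Next, for $v\in C([0,T^*);H^3)$ the field $\mathcal{R}_{-\alpha}(v)$ is Lipschitz in $x$ by (\ref{L infty bound}) applied to $\nabla v$. So its flow map $X(t,x)$, defined by $\partial_t X=\mathcal{R}_{-\alpha}(v)(t,X)$ with $X(0,x)=x$, exists on $[0,T^*)$. The vorticity $\omega\in H^2$ is continuous, and $\omega_t\in H^1$ from the equation. This is not quite enough for classical differentiation along characteristics. I would justify the computation by a standard mollification argument, or by noting that the solution is smooth enough after approximating $v_0$ by smoother data, where the $H^s$ theory gives classical solutions with the same bound. This regularity justification is routine but is the most technical point.

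Pick $x_0$ with $\omega_0(x_0)=m:=\min\omega_0<0$; the minimum is attained since $\omega_0$ is continuous and decays at infinity. Set $y(t)=\omega(t,X(t,x_0))$. Then
\[
y'\leq -\tfrac{\sin\alpha}{2}\,y^2,\qquad y(0)=m<0 ,
\]
so $y$ stays negative and decreasing. Dividing by $y^2$ gives $(1/y)'\geq \frac{\sin\alpha}{2}$, hence
\[
\frac{1}{y(t)}\geq \frac1m+\frac{\sin\alpha}{2}\,t .
\]
Since $1/y<0$ throughout, this forces $t<-\frac{2}{m\sin\alpha}$. More precisely, $y(t)\to-\infty$ no later than that time, so $\|\omega(t)\|_{L^\infty}$ cannot remain bounded on any interval extending past $-\frac{2}{m\sin\alpha}$.

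Finally, a solution in $C([0,T);H^3)$ has $\|\omega\|_{L^\infty}\leq C\|v\|_{H^3}$ bounded on compact subintervals. Therefore the maximal existence time from Theorem \ref{LWP theorem} must satisfy $T^*\leq -\frac{2}{[\sin\alpha]\min\omega_0}$, which is (\ref{upper bound of T}). Consistently with the blow-up criterion in Theorem \ref{LWP theorem}, $\int_0^t\|\omega\|_{L^\infty}$ diverges as $t\nearrow T^*$.

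The main obstacle is the pointwise inequality $|\nabla v|^2\geq\frac12\omega^2$. It is elementary but essential, because it is exactly what produces the constant $2$ in (\ref{upper bound of T}). The remainder is the routine characteristic argument.
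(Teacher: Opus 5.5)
Your proposal is correct and follows essentially the same route as the paper. The pointwise bound $|\nabla v|^2\geq \frac12\omega^2$ (from $\dv v=0$) turns the vorticity equation into a Riccati inequality along the flow of $\mathcal{R}_{-\alpha}(v)$, and comparing with its explicit solution at the minimizing point of $\omega_0$ gives (\ref{upper bound of T}); your explicit justification of differentiating along characteristics at $H^3$ regularity, and concluding via $\|\omega\|_{L^\infty}\le C\|v\|_{H^3}$ on compact subintervals, are careful refinements the paper leaves implicit rather than a different argument.
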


\begin{proof} 
W first find the following inequality from (\ref{Eq of omega}):
\eqn \label{omega inequality}
\omega_{t}+\mathcal{R}_{-\alpha}(v)\cdot \nabla \omega \leq -\frac{[\sin\alpha]}{2}\omega^2,
\een
because $|\omega|^2 \leq 2|Dv|^2$ holds pointwise. Let $\eta$ be the flow map associated with $\mathcal{R}_{-\alpha}(v)$:
\[
\frac{d}{dt}\eta(t, y)=\mathcal{R}_{-\alpha}(v)(t, \eta(t,y)), \quad \eta(t,y)\Big|_{t=0}=y. 
\]
Then, along the map $\eta$, $W(t,y)=\omega(t, \eta(t,y))$ satisfies 
\[
\frac{d}{dt}W(t,y) \leq -\frac{[\sin\alpha]}{2}W^2(t,y).
\]
By solving this inequality, we derive
\[
W(t,y)\leq  \frac{2\omega_{0}(y)}{\displaystyle 2+[\sin\alpha] \omega_{0}(y)t}.
\]
So, if initially $\omega_{0}(y)<0$, then $W(t,y)$ tends to $-\infty$ as
\[
t\rightarrow  -\frac{2}{[\sin\alpha]\omega_{0}(y)}.
\]
By the blow-up criterion in Theorem \ref{LWP theorem}, the solution cannot exist globally-in-time, with the maximal existence time $T^{\ast}$ bounded  as (\ref{upper bound of T}).
\end{proof}

%%%%%%%%%%%%%%%%%%%%%%%%%%%
\section{The viscous case} \label{sec:4}
%%%%%%%%%%%%%%%%%%%%%%%%%%%

%%%%%%%%%%%%%%%%%%%%
\subsection{Global well-posedness}
%%%%%%%%%%%%%%%%%%%
We recall (\ref{Eq of v}) with $\nu>0$:
\eqn \label{Vis Eq of v}
v_{t}+\mathbb{P}\left(\mathcal{R}_{-\alpha}(v)\cdot \nabla v\right)-\nu\Delta v=0.
\een
Since the value of $\nu$ does not affect our results, we set $\nu=1$ for simplicity. To show the global existence of solutions without any sign condition on the initial vorticity, we introduce a lemma concerning the positive part $\omega^+ = \max(0, \omega)$. To estimate $\omega^{+}$, we use the following two lemmas.

\begin{lemma}\upshape \label{L2 lemma plus}
For a scalar function $f \in L^{2}(0,\infty; L^2)$ with $f_{t} \in L^{2}(0,\infty; L^2)$, we have
\[
\frac{1}{2}\frac{d}{dt}\left\|f^{+}\right\|^{2}_{L^{2}}=\int f_{t}f^{+},
\]
where the time derivative is taken in the distributional sense.
\end{lemma}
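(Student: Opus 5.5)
The plan is to regularize the positive part and pass to the limit. The natural statement to prove is the integrated identity
\[
\frac12\|f^+(t_2)\|_{L^2}^2-\frac12\|f^+(t_1)\|_{L^2}^2=\int_{t_1}^{t_2}\!\!\int f_t f^+\,dx\,d\tau \quad\text{for a.e. } t_1<t_2 .
\]
This identity is equivalent to the claim, since the right-hand side is an $L^1_{loc}$ function of time.

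First, note that $f\in H^1(0,T;L^2)$, viewed as a function of time with values in $L^2$, has a representative in $C([0,T];L^2)$. It is absolutely continuous in time with derivative $f_t$. Hence $f^+(t)$ is defined for every $t$, and $\|f^+(t)\|_{L^2}^2$ is a well-defined continuous function, because $|a^+-b^+|\le|a-b|$.

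Next, fix a convex $C^1$ approximation $G_\varepsilon$ of $G(s)=\frac12(s^+)^2$. One concrete choice is $G_\varepsilon(s)=\int_0^s \phi_\varepsilon(r)\,dr$, where $\phi_\varepsilon$ is Lipschitz, $0\le\phi_\varepsilon(r)\le r^+$, and $\phi_\varepsilon(r)\to r^+$ as $\varepsilon\to0$. Then $G_\varepsilon'=\phi_\varepsilon$ is globally Lipschitz with $|\phi_\varepsilon(r)|\le|r|$, and $0\le G_\varepsilon(s)\le \frac12 s^2$. For such $G_\varepsilon$, the chain rule
\[
\frac{d}{dt}\int G_\varepsilon(f)=\int \phi_\varepsilon(f) f_t
\]
can be justified in two steps. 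First, mollify $f$ in time, $f^\delta=f*\rho_\delta$, which is $C^1$ into $L^2$; for it the chain rule is classical, since $s\mapsto\int G_\varepsilon(s)$ is $C^1$ on $L^2$ with derivative $\phi_\varepsilon(s)$, a consequence of $\phi_\varepsilon$ being Lipschitz. Second, integrate in time and let $\delta\to0$, using $f^\delta\to f$ in $C([t_1,t_2];L^2)$ and $f^\delta_t\to f_t$ in $L^2(L^2)$. The Lipschitz bound on $\phi_\varepsilon$ gives $\phi_\varepsilon(f^\delta)\to\phi_\varepsilon(f)$ in $L^2(L^2)$, so the products converge in $L^1$. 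This yields the integrated identity with $G_\varepsilon$ and $\phi_\varepsilon$ in place of $G$ and $f^+$.

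Finally, let $\varepsilon\to0$. Dominated convergence applies on both sides:
\begin{itemize}
\item $\int G_\varepsilon(f(t))\to\frac12\|f^+(t)\|_{L^2}^2$, with dominating function $\frac12 f^2$;
\item $\phi_\varepsilon(f)f_t\to f^+f_t$, with dominating function $|f||f_t|\in L^1((t_1,t_2)\times\mathbb{R}^2)$.
\end{itemize}
This gives the integrated identity, hence the distributional statement.

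The main obstacle is the lack of differentiability of $s\mapsto s^+$ at $0$. The set $\{f=0\}$ causes no trouble, since $f^+$ vanishes there and the limit integrand is $f^+f_t$ regardless. The regularization above is designed precisely to bypass this. An alternative route would be the Stampacchia-type fact that $(f^+)_t=\mathbf 1_{\{f>0\}}f_t$ in $L^2(L^2)$, but the $G_\varepsilon$ argument is more self-contained.
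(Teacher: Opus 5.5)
Your proof is correct and follows essentially the paper's route: regularize, establish the chain rule for fixed $\varepsilon$, and let $\varepsilon\to0$ by dominated convergence. The paper only asserts the fixed-$\varepsilon$ chain rule via ``classical mollification,'' whereas you justify it through time mollification and the Fr\'echet differentiability of $u\mapsto\int G_\varepsilon(u)$ on $L^2$. The real difference is that you regularize $\frac12(s^+)^2$ directly, which gives you valid dominating functions $\frac12 f^2$ and $|f|\,|f_t|$. The paper's $G_\varepsilon$ is negative for $s<0$, so its bound $|G_\varepsilon(f)G_\varepsilon'(f)|\le|f^+|$ fails on $\{f<0\}$ and must be replaced by $|f|$. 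Note also that $\frac12(s^+)^2$ is already $C^1$ with $1$-Lipschitz derivative $s^+$, so you could take $\phi_\varepsilon(r)=r^+$ and drop the $\varepsilon$-limit altogether.
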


\begin{proof}
The proof follows from a classical mollification together with another standard approximation of the positive part of $f$. Let 
\[
G_\epsilon(s) = \frac{\sqrt{s^2 + \epsilon^2} + s - \epsilon}{2} \quad \text{and} \quad G_\epsilon'(s) = \frac{1}{2} \left( \frac{s}{\sqrt{s^2 + \epsilon^2}} + 1 \right) \quad \text{for $\epsilon>0$}.
\]
Then, $G_{\epsilon}\in C^{\infty}$ and $G_{\epsilon} \rightarrow \max\{0, s\}$ as $\epsilon\rightarrow 0$. Moreover, $0<G_\epsilon'(s)<1$ for all $s\in\mathbb{R}$ and
\[
G_\epsilon'(s)\to
\begin{cases}
1, & s>0,\\
\frac{1}{2} & s=0,\\
0 & s<0,
\end{cases}
\qquad \text{as } \epsilon\to 0.
\]For this approximation $G_\epsilon$, the function 
\[
t \mapsto \frac{1}{2} \int |G_\epsilon(f(x,t))|^2
\]
is absolutely continuous and thus we can take the time derivative for a.e. $t\in (0,\infty)$ to derive 
\[
\frac{d}{dt} \left( \frac{1}{2} \int |G_\epsilon(f)|^2 \right) =\int f_t \left( G_\epsilon(f) G_\epsilon'(f) \right).
\]
Since 
\[
G_\epsilon(f) \to f^+, \quad G_\epsilon'(f) \to \chi_{\{f>0\}}+\frac{1}{2}\chi_{\{f=0\}}
\]
as $\epsilon \to 0$,  
\[
G_\epsilon(f) G_\epsilon'(f) \to f^+ \chi_{\{f>0\}} = f^+
\]
pointwise. Since $|G_\epsilon(f) G_\epsilon'(f)| \leq |f^+|$ with $f^+ \in L^2$ and $f_{t} \in L^2$, we obtain
\[
\lim_{\epsilon\rightarrow 0}\int f_t G_\epsilon(f) G_\epsilon'(f)= \int f_t f^+ 
\]
by the Lebesgue dominated convergence theorem. We also note 
\[
\lim_{\epsilon\rightarrow 0} \int |G_\epsilon(f)|^2 =\int |f^+|^2
\]
as $\epsilon\rightarrow 0$.
Integrating the identity  over $[0, \tau]$ gives 
\[
\frac{1}{2} \|G_\epsilon(f(\tau))\|_{L^2}^2 - \frac{1}{2} \|G_\epsilon(f(0))\|_{L^2}^2 = \int_0^\tau \int f_t \left( G_\epsilon(f) G_\epsilon'(f) \right)dt.
\]
Passing to the limit $\epsilon \to 0$ on both sides yields 
\[
\frac{1}{2} \|f^+(\tau)\|_{L^2}^2 - \frac{1}{2} \|f^+(0)\|_{L^2}^2 = \int_0^\tau \int f_t f^+ dt.
\]
Differentiating with respect to $\tau$ (in the sense of distributions) completes the proof of Lemma \ref{L2 lemma plus}.
\end{proof}

\begin{lemma}\upshape \label{L2 lemma grad plus}
For a scalar function $f\in H^1$, the following identity holds:
\[
-\int f^+ \Delta f  = \left\| \nabla f^+ \right\|_{L^2}^2.
\]
\end{lemma}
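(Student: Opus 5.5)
The identity will follow from an integration by parts combined with the standard chain rule for the positive part in $H^1$. One issue must be handled first. For $f\in H^1$ only, $\Delta f$ is merely in $H^{-1}$, so I read the left-hand side as the duality pairing $\langle -\Delta f, f^+\rangle_{H^{-1},H^1}$, which by definition equals $\int \nabla f\cdot\nabla f^+$. If $f\in H^2$, as in the application to $\omega$ with $v\in H^3$, the left-hand side is a genuine integral. Green's formula on $\mathbb{R}^2$, justified by density of $C_c^\infty$ in $H^1$ and continuity of both sides in $f^+\in H^1$, gives
\[
-\int f^+\Delta f=\int \nabla f\cdot \nabla f^+ .
\]
Either way, the lemma reduces to showing that $f^+\in H^1$ and
\[
\int \nabla f\cdot\nabla f^+=\|\nabla f^+\|_{L^2}^2 .
\]

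The key step is the Stampacchia chain rule: $f^+\in H^1$ with $\nabla f^+=\chi_{\{f>0\}}\nabla f$ a.e. I would prove this with the same approximation $G_\epsilon$ used in the proof of Lemma \ref{L2 lemma plus}.

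Since $G_\epsilon$ is smooth, satisfies $G_\epsilon(0)=0$, and has $0<G_\epsilon'<1$, the composition obeys $G_\epsilon(f)\in H^1$ with $\nabla G_\epsilon(f)=G_\epsilon'(f)\nabla f$. This is first checked for $f$ smooth and then passed to $H^1$ by density, using the Lipschitz bound. As $\epsilon\to 0$, we have $G_\epsilon(f)\to f^+$ in $L^2$, by dominated convergence with $|G_\epsilon(f)|\le |f|$.

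For the gradients, $G_\epsilon'(f)\nabla f\to(\chi_{\{f>0\}}+\tfrac12\chi_{\{f=0\}})\nabla f$ in $L^2$, again by dominated convergence. Pairing with test functions then identifies the weak gradient of $f^+$ as this limit.

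The main (mild) obstacle is the set $\{f=0\}$, where the limit of $G_\epsilon'$ is $\tfrac12$ rather than $0$ or $1$. Here I would invoke the classical fact that $\nabla f=0$ a.e. on any level set $\{f=c\}$ for $f\in W^{1,1}_{\mathrm{loc}}$. To prove it, apply the argument above to $f^-=(-f)^+$ as well, which gives $\nabla f^-=-(\chi_{\{f<0\}}+\tfrac12\chi_{\{f=0\}})\nabla f$. Then $f=f^+-f^-$ yields
\[
\nabla f=\nabla f^+-\nabla f^-=\nabla f\,(\chi_{\{f\neq0\}}+\chi_{\{f=0\}}) ,
\]
which is consistent but gives nothing new. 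A cleaner route is $|f|=f^++f^-$, whose gradient is $(\chi_{\{f>0\}}-\chi_{\{f<0\}})\nabla f$. This holds because $|f|$ can be approximated by the even function $\sqrt{f^2+\epsilon^2}-\epsilon$, whose derivative tends to $\mathrm{sgn}$, which equals $0$ at zero. Comparing this with the sum of the formulas for $\nabla f^+$ and $\nabla f^-$ gives $\chi_{\{f=0\}}\nabla f=0$ a.e. Hence $\nabla f^+=\chi_{\{f>0\}}\nabla f$.

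Finally,
\[
\int \nabla f\cdot\nabla f^+=\int \chi_{\{f>0\}}|\nabla f|^2=\int|\nabla f^+|^2 ,
\]
which completes the identity.
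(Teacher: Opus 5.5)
Your overall route matches the paper's: integrate by parts to reduce to $\int\nabla f\cdot\nabla f^+=\|\nabla f^+\|_{L^2}^2$, then use $\nabla f^+=\chi_{\{f>0\}}\nabla f$, which the paper simply cites as Stampacchia's lemma. You go further and try to prove that chain rule, and the level-set step fails. Your own formula $\nabla f^-=-(\chi_{\{f<0\}}+\tfrac12\chi_{\{f=0\}})\nabla f$ gives
\[
\nabla f^++\nabla f^-=\bigl(\chi_{\{f>0\}}-\chi_{\{f<0\}}\bigr)\nabla f ,
\]
because the two terms $\pm\tfrac12\chi_{\{f=0\}}\nabla f$ cancel. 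This is identical to your formula for $\nabla|f|$. So the comparison is a tautology, just like the comparison via $f=f^+-f^-$ that you already discarded, and it does not give $\chi_{\{f=0\}}\nabla f=0$. To extract that fact you need two formulas for the \emph{same} function whose coefficients on $\chi_{\{f=0\}}$ differ, and your approximations never produce such a pair. The step cannot be skipped. With only $\nabla f^+=(\chi_{\{f>0\}}+\tfrac12\chi_{\{f=0\}})\nabla f$ you get $\int\nabla f\cdot\nabla f^+-\|\nabla f^+\|_{L^2}^2=\tfrac14\int_{\{f=0\}}|\nabla f|^2$, which is exactly the quantity that must vanish.

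Any of the following repairs it.

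(i) Approximate $s^+$ by a $C^1$ function whose derivative vanishes at the origin, e.g.\ $F_\epsilon(s)=\sqrt{s^2+\epsilon^2}-\epsilon$ for $s>0$ and $F_\epsilon(s)=0$ for $s\le 0$. Then $0\le F_\epsilon'\le 1$ and $F_\epsilon'\to\chi_{(0,\infty)}$ at every point, including $s=0$. Your limiting argument then gives $\nabla f^+=\chi_{\{f>0\}}\nabla f$ directly.

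(ii) Keep $G_\epsilon$ but apply your formula to $g=f^+$. Since $g^+=g$, $\{g>0\}=\{f>0\}$ and $\{g=0\}=\{f\le 0\}$, it reads $\nabla f^+=(\chi_{\{f>0\}}+\tfrac12\chi_{\{f\le 0\}})\nabla f^+$. Hence $\nabla f^+=0$ a.e.\ on $\{f\le 0\}$. Since $\nabla f^+=\tfrac12\nabla f$ on $\{f=0\}$, this gives $\nabla f=0$ a.e.\ there.

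(iii) Cite Stampacchia's lemma, as the paper does.

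The rest of your argument is fine: the duality reading of $-\int f^+\Delta f$ for $f\in H^1$, Green's formula for $f\in H^2$, the chain rule for $G_\epsilon(f)$, and the identification of the weak limit. It is also more careful than the paper about what the left-hand side means when $f$ is only in $H^1$.
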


\begin{proof}
If $f$ is in $H^{1}$, $f^{+}$ is also in $H^{1}$ by Stampacchia's Lemma \cite[Theorem A.1]{Kinderlehrer}. Moreover, the weak gradient of $f^+$ is given almost everywhere by $\nabla f^+ = \chi_{\{f > 0\}} \nabla f $, which justifies the following identity
\[
\int \nabla f^+ \cdot \nabla f= \int_{\{f>0\}} \nabla f \cdot \nabla f = \int \left|\nabla f^+\right|^2.
\]
This completes the proof of Lemma {\ref{L2 lemma grad plus}}. 
\end{proof}

We now show that $\omega^+$ decays in time as follows.

\begin{lemma}\label{Decay lemma of omega} \upshape
Suppose there exists a global-in-time solution $v\in C([0,\infty); H^{3})$ of (\ref{Vis Eq of v}) with $v_0\in H^3$. Then, $\omega^+$ decays in time as follows: 
\eqn \label{Decay of positive vorticity}
\|\omega^+(t)\|_{L^\infty}\leq \frac{2\|\omega_0\|_{L^\infty}}{(2+[\sin\alpha]\|\omega_0\|_{L^\infty}t)}.
\een
\end{lemma}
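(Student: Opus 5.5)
We argue with a comparison principle for the parabolic inequality satisfied by $\omega$. Setting $\nu=1$ in (\ref{Eq of omega}) and using the pointwise bound $\omega^2\leq 2|\nabla v|^2$ exactly as in (\ref{omega inequality}), we get
\[
\omega_t+\mathcal{R}_{-\alpha}(v)\cdot\nabla\omega-\Delta\omega\leq -\frac{[\sin\alpha]}{2}\omega^2 .
\]
Let $M=\|\omega_0\|_{L^\infty}$, which is finite since $v_0\in H^3$ gives $\omega_0\in H^2$. Define the spatially constant supersolution
\[
\Phi(t)=\frac{2M}{2+[\sin\alpha]Mt},
\]
which solves $\Phi'=-\frac{[\sin\alpha]}{2}\Phi^2$ with $\Phi(0)=M\geq\omega_0$. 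The goal is $\omega\leq\Phi$ everywhere. Since $\Phi\geq 0$, this yields $\omega^+\leq\Phi$, which is (\ref{Decay of positive vorticity}).

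To prove the comparison we work in $L^2$, since Lemmas \ref{L2 lemma plus} and \ref{L2 lemma grad plus} were set up for this. Put $f=\omega-\Phi$. Subtracting the ODE for $\Phi$ from the inequality for $\omega$ gives
\[
f_t+\mathcal{R}_{-\alpha}(v)\cdot\nabla f-\Delta f\leq -\frac{[\sin\alpha]}{2}(\omega+\Phi)f .
\]
We multiply by $f^+$ and integrate, using Lemma \ref{L2 lemma plus} for the time derivative and Lemma \ref{L2 lemma grad plus} for the Laplacian, which produces a term $-\|\nabla f^+\|_{L^2}^2\leq 0$. For the transport term we integrate by parts using (\ref{div curl}):
\[
\int \mathcal{R}_{-\alpha}(v)\cdot\nabla f\, f^+=-\frac{[\sin\alpha]}{2}\int\omega (f^+)^2 .
\]
On the right-hand side, note that where $f^+>0$ we have $\omega>\Phi\geq0$, so $-(\omega+\Phi)f f^+\leq 0$. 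Altogether,
\[
\frac{1}{2}\frac{d}{dt}\|f^+\|_{L^2}^2\leq \frac{[\sin\alpha]}{2}\|\omega\|_{L^\infty}\|f^+\|_{L^2}^2 .
\]
Here $\|\omega\|_{L^\infty}$ is locally bounded in time because $v\in C([0,\infty);H^3)$. Since $f^+(0)=0$, Gronwall's inequality gives $f^+\equiv 0$ on every finite interval.

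The main obstacle is a technical one: $f=\omega-\Phi$ is not in $L^2$, since $\Phi$ is a nonzero constant in space, so Lemma \ref{L2 lemma plus} cannot be applied verbatim. It does still work, because $\omega(t)\in H^2$ decays at infinity. Hence $f^+$ is supported in a bounded region $\{\omega>\Phi(t)\}$ when $\Phi>0$, which holds for every finite $t$, and $f^+\leq\omega^+\in L^2$. The lemma's proof then goes through, because only $f^+\in L^2$ and $f_t$ integrated against $f^+$ are needed. I would also check that $\omega_t\in L^2_{loc}L^2$. This follows from the equation, since $\Delta\omega\in L^2$ and the nonlinear terms are in $L^2$ for $v\in H^3$.

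A secondary point is that the integration by parts and Lemma \ref{L2 lemma grad plus} must be applied to $f$, which is only locally $H^1$. Since $\nabla f=\nabla\omega$ and $f^+$ is compactly supported in the region above, this is harmless.
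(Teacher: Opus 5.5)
Your proposal is correct and is essentially the paper's proof. Both compare $\omega$ with the spatially constant solution $y(t)=\frac{2M}{2+[\sin\alpha]Mt}$ of $y'=-\frac{[\sin\alpha]}{2}y^2$, test the inequality for $\omega-y$ against $(\omega-y)^+$, use $\dv\mathcal{R}_{-\alpha}(v)=[\sin\alpha]\omega$ on the transport term and the nonnegativity of $(\omega+y)(\omega-y)(\omega-y)^+$, and conclude by Gr\"onwall from $(\omega_0-y(0))^+=0$. Your use of $\|\omega\|_{L^\infty}$ where the paper uses $\|\omega^+\|_{L^\infty}$ is immaterial.

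Your observation that $\omega-y\notin L^2$ is a point the paper passes over, and your localization fixes it, with one caveat. The approximants $G_\epsilon(f)$ in the proof of Lemma \ref{L2 lemma plus} do not vanish where $f<0$, so here $\int|G_\epsilon(f)|^2=\infty$ and that proof does not apply verbatim. Use an approximation supported in $\{s>0\}$ instead, e.g.\ differentiate $\frac12\int\bigl((\omega-y)^+\bigr)^2$ directly. This works because $\{\omega(s)>y(s)\}$ stays in a fixed ball for $s\in[0,T]$, since $\omega\in C([0,T];H^2)\subset C([0,T];C_0)$ and $y(T)>0$ (the case $M=0$ is trivial).
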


\begin{proof}
As (\ref{omega inequality}), we use  
\eqn\label{eq;lemvor}
\omega_t-\Delta\omega+\mathcal{R}_{-\alpha}(v)\cdot\nabla \omega+c_{\alpha}|\omega|^2\leq 0, \quad c_{\alpha}=\frac{[\sin\alpha]}{2}.
\een 
We now consider the following ODE:
\eqn\label{eq;ode}
y'+c_{\alpha}y^2=0 \quad \text{with}\quad y_0=\|\omega_0\|_{L^\infty}.
\een 
Then, $y(t)$ is given by 
\[
y(t)= \frac{\|\omega_0\|_{L^\infty}}{(1+c_{\alpha}\|\omega_0\|_{L^\infty}t)}.
\]
Subtracting \eqref{eq;ode} from \eqref{eq;lemvor}, we get
\eqn \label{eq:4.6d}
(\omega-y(t))_t-\Delta(\omega-y(t))+\mathcal{R}_{-\alpha}(v)\cdot\nabla (\omega-y(t))+c_\alpha(|\omega|^2-y^2(t))\leq 0.
\een
We now take the inner product of this inequality with $(\omega-y(t))^{+}$. We first note that the last term on the left-hand side of (\ref{eq:4.6d}) gives a non-negative term:
\[
\left(|\omega|^2-y^2(t))(\omega-y(t)\right)^{+}=(\omega+y(t))(\omega-y(t))(\omega-y(t))^{+} \ge 0,
\]
where we use the sign condition of $y\ge 0$. By applying Lemma {\ref{L2 lemma plus}} and Lemma {\ref{L2 lemma grad plus}} with $f=\omega-y(t)$,   
\[
\begin{split}
\frac{1}{2} \frac{d}{dt}\left\|(\omega-y(t))^{+}\right\|^2_{L^2} +\left\|\nabla(\omega-y(t))^{+}\right\|^2_{L^2}&\leq -\int \mathcal{R}_{-\alpha}(v)\cdot\nabla (\omega-y(t))(\omega-y(t))^{+}\\
& = \int \frac{[\sin\alpha]}{2}\omega \left|(\omega-y(t))^{+}\right|^2 \leq \int \frac{[\sin\alpha]}{2}\omega^{+} \left|(\omega-y(t))^{+}\right|^2\\
&\leq \frac{ \left\|\omega^{+}\right\|_{L^\infty}}{2} \left\| (\omega-y(t))^{+}\right\|^2_{L^2}.
\end{split}
\]
By Gr\"onwall's inequality and the condition $\omega_{0}\leq y_0=\|\omega_0\|_{L^\infty}$, we have
\[
\left\|(\omega(t)-y(t))^{+}\right\|^2_{L^2} \leq\exp\left[ \int^t_0\left\|\omega^{+}(\tau)\right\|_{L^\infty}d\tau\right]\left\|(\omega-y)^{+}_{0}\right\|^2_{L^2}=0.
\]
So, we conclude that $\left(\omega(x,t)-y(t)\right)^{+}=0$ for all $(x,t)$ and thus complete the proof of Lemma \ref{Decay lemma of omega}. 
\end{proof}

The first result concerning (\ref{Vis Eq of v}) is as follows.

\begin{theorem}\label{GWP theorem}\upshape
Let $v_{0}\in H^{3}$ with $\dv v_{0}=0$. Then, there exists a unique solution $v\in C([0,\infty);H^3)$ of (\ref{Vis Eq of v}) satisfying the following bound for all $t>0$:
\eqn \label{Global uniform bound}
\begin{split}
\left\|v(t)\right\|^{2}_{H^{3}}+\int^{t}_{0}\left\|\nabla v(\tau)\right\|^{2}_{H^{3}}d\tau &\leq C\left\|\omega_{0}\right\|^{2}_{H^{2}}\exp\left[C\|v_0\|^2_{L^2}\left(1+\frac{[\sin\alpha]}{2}\left\|\omega^{+}_0\right\|_{L^\infty}t\right)^2\right]\\
&\quad +\|v_0\|^2_{L^2}\left(1+\frac{[\sin\alpha]}{2}\left\|\omega^{+}_0\right\|_{L^\infty}t\right)^2.
\end{split}
\een
\end{theorem}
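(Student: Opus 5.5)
The plan is to combine standard local existence with a priori bounds that prevent blow-up. As in Theorem \ref{LWP theorem}, the methods of \cite{BV, Majda} give a unique local solution $v\in C([0,T^*);H^3)$ with $\int_0^t\|\nabla v\|_{H^3}^2<\infty$. It then suffices to show that $\|v(t)\|_{H^3}$ obeys (\ref{Global uniform bound}) on $[0,T^*)$, so that $T^*=\infty$.

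\emph{Step 1: decay of $\omega^+$ on $[0,T^*)$.} The proof of Lemma \ref{Decay lemma of omega} only uses the equation on the interval of existence, so it applies verbatim on $[0,T^*)$. I will also sharpen it by taking $y_0=\|\omega_0^+\|_{L^\infty}$ instead of $\|\omega_0\|_{L^\infty}$. This is allowed because the argument only needs $y\ge0$ and $\omega_0\le y_0$. It yields
\[
\|\omega^+(t)\|_{L^\infty}\le y(t)=\frac{\|\omega^+_0\|_{L^\infty}}{1+c_\alpha\|\omega^+_0\|_{L^\infty}t}, \qquad c_\alpha=\frac{[\sin\alpha]}{2}.
\]

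\emph{Step 2: $L^2$ energy.} Pairing (\ref{Vis Eq of v}) with $v$ and using (\ref{div curl}) as in (\ref{energy estimates 1}) gives
\[
\frac12\frac{d}{dt}\|v\|_{L^2}^2+\|\nabla v\|_{L^2}^2=c_\alpha\int\omega|v|^2\le c_\alpha y(t)\|v\|_{L^2}^2 .
\]
Since $\int_0^t c_\alpha y=\log(1+c_\alpha\|\omega_0^+\|_{L^\infty}t)$, Gronwall's inequality gives
\[
\|v(t)\|_{L^2}^2+\int_0^t\|\nabla v\|_{L^2}^2\le \|v_0\|_{L^2}^2\Bigl(1+c_\alpha\|\omega_0^+\|_{L^\infty}t\Bigr)^2=:E(t).
\]
This produces the second term of (\ref{Global uniform bound}). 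It also gives $\int_0^t\|\omega\|_{L^2}^2\le E(t)$, because $\|\omega\|_{L^2}=\|\nabla v\|_{L^2}$.

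\emph{Step 3: $H^2$ estimate for $\omega$ via (\ref{Eq of omega}).} At the $L^2$ level, the transport term contributes $c_\alpha\int\omega^3$ and the source contributes $-[\sin\alpha]\int|\nabla v|^2\omega$. Using $\|\nabla v\|_{L^3}\le C\|\omega\|_{L^3}$ and the Gagliardo–Nirenberg bound $\|\omega\|_{L^3}^3\le C\|\omega\|_{L^2}^2\|\nabla\omega\|_{L^2}$, I get
\[
\frac{d}{dt}\|\omega\|_{L^2}^2+\|\nabla\omega\|_{L^2}^2\le C\|\omega\|_{L^2}^2\,\|\omega\|_{L^2}^2 .
\]
For $D^k\omega$ with $k=1,2$, I apply $D^k$ and handle the transport term with Lemma \ref{Commutator lemma}, together with $\int (\mathcal{R}_{-\alpha}(v)\cdot\nabla D^k\omega) D^k\omega=-c_\alpha\int\omega|D^k\omega|^2$. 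For the source I integrate by parts once and use (\ref{Product estimates 2}). Every term is then bounded by products of $L^4$ or $L^\infty$ norms of lower derivatives, which I interpolate via (\ref{L4 bound}) and (\ref{Interpolation}) so that the top-order dissipation $\|\nabla D^k\omega\|_{L^2}^2$ can be absorbed by Young's inequality. The target is
\[
\frac{d}{dt}\|\omega\|_{H^2}^2+\|\nabla\omega\|_{H^2}^2\le C\bigl(1+\|\omega\|_{L^2}^2\bigr)\|\omega\|_{H^2}^2
\]
for the total $H^2$ norm. Gronwall's inequality combined with Step 2 then gives $\|\omega(t)\|_{H^2}^2\le \|\omega_0\|_{H^2}^2\exp[C E(t)]$, up to harmless constants.

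Finally, $\|v\|_{H^3}^2\le C(\|v\|_{L^2}^2+\|\omega\|_{H^2}^2)$ by Calderón–Zygmund \cite{Stein}, and the same relation controls the dissipation integral. This gives (\ref{Global uniform bound}) and hence $T^*=\infty$.

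The main obstacle is Step 3. The Gronwall coefficient must involve only $\|\omega\|_{L^2}^2=\|\nabla v\|_{L^2}^2$, whose time integral is controlled by $E(t)$, and not $\|\omega\|_{L^\infty}$, which has no a priori bound because only its positive part decays. The sign-indefinite quadratic source $|\nabla v|^2$ must be handled purely through two-dimensional Ladyzhenskaya-type interpolation so that each term is absorbable. If a direct $H^2$ bound proves awkward, I will first close the $\dot H^1$ estimate for $\omega$ and then use the resulting bound on $\int_0^t\|\nabla\omega\|_{L^2}^2$ as the Gronwall weight at the next level.
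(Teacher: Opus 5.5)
Your proposal follows the paper's proof essentially step for step: decay of $\omega^+$ via the comparison argument of Lemma \ref{Decay lemma of omega}, the weighted $L^2$ energy estimate, and then $L^2$, $\dot H^1$, $\dot H^2$ estimates for $\omega$ closed by Gr\"onwall with weight $\|\omega\|_{L^2}^2=\|\nabla v\|_{L^2}^2$. Your choice $y_0=\|\omega_0^+\|_{L^\infty}$ is what actually produces $\|\omega_0^+\|_{L^\infty}$, rather than $\|\omega_0\|_{L^\infty}$, in (\ref{Global uniform bound}).

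One correction: drop the ``$1+$'' from your Step 3 target. By scaling, each homogeneous level closes as $\frac{d}{dt}\|D^k\omega\|_{L^2}^2+\|D^{k+1}\omega\|_{L^2}^2\le C\|\omega\|_{L^2}^2\|D^k\omega\|_{L^2}^2$, as in the paper. The extra $1$ would instead give a factor $e^{Ct}$, which is not a harmless constant; for instance, when $\omega_0^+=0$ the right-hand side of (\ref{Global uniform bound}) does not depend on $t$.
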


\begin{proof}
Given the local-in-time result in Theorem \ref{LWP theorem}, it remains to show that the maximal existence time $T^{\ast}$ is infinite. We proceed by contradiction, assuming that $T^{\ast} < \infty$. Then, by establishing a uniform bound for $\|v(t)\|_{H^3}$ on $[0, T^{\ast})$, we show that the solution can be extended to a larger interval $[0, T^{\ast} + \epsilon)$. This contradicts the maximality of $T^{\ast}<\infty$; hence, we must have $T^{\ast} = \infty$.

\vspace{1ex}

\noindent
$\blacktriangleright$ We now establish a uniform bound for $\|v(t)\|_{H^3}$ on $[0, T^{\ast})$. We begin with  the $L^{2}$ bound of $v$: 
\eqn \label{L2 bound of u with vorticity}
\frac{1}{2}\frac{d}{dt}\|v\|^2_{L^2}+\|\nabla v\|^2_{L^2}=\frac{1}{2}\int [\sin\alpha]\omega|v|^2\leq \frac{1}{2}[\sin\alpha] \|\omega^+\|_{L^\infty}\| v\|^2_{L^2}.
\een
By Gro\"nwall's inequality and (\ref{Decay of positive vorticity}), we obtain
\begin{equation}\label{Viscous L2 bound of v}
\begin{split}
\|v(t)\|^2_{L^2}+\int^t_0\|\nabla v(\tau)\|^2_{L^2}d\tau &\leq \|v_0\|^2_{L^2}\exp\left[{[\sin\alpha]} \left\|\omega^{+}_0\right\|_{L^\infty} \int^{t}_{0}\frac{1}{1+\frac{[\sin\alpha]}{2}\left\|\omega^{+}_0\right\|_{L^\infty}\tau}d\tau\right]\\
&=\|v_0\|^2_{L^2}\exp\left[2 \ln \left(1+\frac{[\sin\alpha]}{2}\left\|\omega^{+}_0\right\|_{L^\infty}t\right)\right]\\
& = \|v_0\|^2_{L^2}\left(1+\frac{[\sin\alpha]}{2}\left\|\omega^{+}_0\right\|_{L^\infty}t\right)^2 :=\mathcal{I}(t).
\end{split}
\end{equation}

To bound the derivatives of $v$, we use the equation of the vorticity. By (\ref{Eq of omega}) and (\ref{div curl}), we have 
\[
\begin{split}
\frac{1}{2}\frac{d}{dt}\left\|\omega\right\|^{2}_{L^{2}}+\left\|\nabla \omega\right\|^{2}_{L^{2}}&=-\int \left(\mathcal{R}_{-\alpha}(v)\cdot \nabla \omega\right)\cdot \omega-\sin\alpha\int |\nabla v|^2 \omega =\sin\alpha \left[\frac{1}{2}\int \omega|\omega|^2-\int |\nabla v|^2 \omega\right]\\
&\leq C \left\|\omega\right\|^{2}_{L^{4}}\left\|\omega\right\|_{L^{2}} \leq C \left\|\omega\right\|^2_{L^{2}}\left\|\nabla \omega\right\|_{L^{2}} \leq C  \left\|\omega\right\|^4_{L^{2}}+\frac{1}{2}\left\|\nabla \omega\right\|^2_{L^{2}}
\end{split}
\]
and thus we obtain 
\[
\frac{d}{dt}\left\|\omega\right\|^{2}_{L^{2}}+\left\|\nabla \omega\right\|^{2}_{L^{2}}\leq C  \left\|\omega\right\|^4_{L^{2}}.
\]
By Gr\"onwall's inequality and (\ref{Viscous L2 bound of v}), we obtain
\eqn \label{Viscous L2 bound of omega}
\left\|\omega(t)\right\|^{2}_{L^{2}}+\int^{t}_{0}\left\|\nabla \omega(s)\right\|^{2}_{L^{2}}ds\leq \left\|\omega_{0}\right\|^2_{L^{2}}\exp\left[C\mathcal{I}(t)\right].
\een

Similarly, we have 
\[
\begin{split}
\frac{1}{2}\frac{d}{dt}\left\|\nabla \omega\right\|^{2}_{L^{2}}+\left\|\Delta \omega\right\|^{2}_{L^{2}}\leq C \left\|\omega\right\|_{L^{2}} \left\|\nabla \omega\right\|^{2}_{L^{4}} \leq C \left\|\omega\right\|_{L^{2}} \left\|\nabla \omega\right\|_{L^{2}}\left\|\Delta \omega\right\|_{L^{2}}
\end{split}
\]
and thus we obtain 
\[
\frac{d}{dt}\left\|\nabla \omega\right\|^{2}_{L^{2}}+\left\|\Delta  \omega\right\|^{2}_{L^{2}}\leq C\left\|\omega\right\|^{2}_{L^{2}}\left\|\nabla \omega\right\|^{2}_{L^{2}}.
\]
By Gr\"onwall's inequality and (\ref{Viscous L2 bound of v}), we get
\eqn \label{Viscous L2 bound of grad of omega}
\left\|\nabla \omega(t)\right\|^{2}_{L^{2}}+\int^{t}_{0}\left\|\Delta\omega(s)\right\|^{2}_{L^{2}}ds\leq \left\|\nabla \omega_{0}\right\|^2_{L^{2}}\exp\left[C\mathcal{I}(t)\right].
\een

 We now bound $\Delta \omega$:
\[
\begin{split}
\frac{1}{2}\frac{d}{dt}\left\|\Delta \omega\right\|^{2}_{L^{2}}&+\left\|\nabla \Delta \omega\right\|^{2}_{L^{2}}=-\int \Delta \left(\mathcal{R}_{-\alpha}(v)\cdot \nabla \omega\right) \Delta \omega - [\sin\alpha]\int \Delta |\nabla v|^2  \Delta \omega\\
&=-\int \left(\Delta \mathcal{R}_{-\alpha}(v)\cdot \nabla \omega\right)\Delta \omega-2\sum^{2}_{k=1}\int \left(\partial_{k}\mathcal{R}_{-\alpha}(v)\cdot \nabla \partial_{k}\omega\right)\Delta\omega \\
&\quad  -\int \left(\mathcal{R}_{-\alpha}(v)\cdot \nabla \Delta \omega\right)\Delta\omega-2[\sin\alpha] \int \left(\nabla \Delta v\cdot \nabla v\right)\Delta\omega -2[\sin\alpha ]\sum^{2}_{k=1} \int\left|\partial_{k}\nabla v\right|^2\Delta \omega  \\
&\leq C\left\|\omega\right\|^2_{L^{2}}\left\|\Delta \omega\right\|^2_{L^{2}}+\frac{1}{2}\left\|\nabla \Delta \omega\right\|^2_{L^{2}},
\end{split}
\]
after some tedious but standard estimates using (\ref{L4 bound}), (\ref{Interpolation}), and (\ref{div curl}). So, we obtain
\[
\frac{d}{dt}\left\|\Delta \omega\right\|^{2}_{L^{2}}+\left\|\nabla \Delta \omega\right\|^{2}_{L^{2}}\leq C \left\|\omega\right\|^{2}_{L^{2}}\left\|\Delta \omega\right\|^{2}_{L^{2}}.
\]
By Gr\"onwall's inequality and (\ref{Viscous L2 bound of v}), we get
\eqn \label{Viscous L2 bound of Delta of omega}
\left\|\Delta \omega(t)\right\|^{2}_{L^{2}}+\int^{t}_{0}\left\|\nabla \Delta\omega(s)\right\|^{2}_{L^{2}}ds\leq \left\|\Delta\omega_{0}\right\|^2_{L^{2}}\exp\left[C\mathcal{I}(t)\right].
\een

By (\ref{Viscous L2 bound of v}), (\ref{Viscous L2 bound of omega}), (\ref{Viscous L2 bound of grad of omega}), and (\ref{Viscous L2 bound of Delta of omega}), we obtain the following bound on $[0, T^{\ast})$:
\eqn \label{Viscous H3 bound of v}
\left\|v(t)\right\|^{2}_{H^{3}}+\int^{t}_{0}\left\|\nabla v(\tau)\right\|^{2}_{H^{3}}d\tau \leq C\left\|\omega_{0}\right\|^{2}_{H^{2}}\exp\left[C\mathcal{I}(t)\right]+\mathcal{I}(t). 
\een

\noindent
$\blacktriangleright$ To finish the proof, we must rule out the possibility that the vorticity $\omega$ diverges to $-\infty$ in finite-time. Suppose that $\omega(t,x) \to -\infty$ as $t \to T^{\ast} <\infty$. On the other hand, the estimate (\ref{Viscous L2 bound of v}) remains valid on $[0, T^{\ast}-\epsilon]$ for any $\epsilon> 0$ and thus we have (\ref{Viscous H3 bound of v}) on $[0, T^{\ast}-\epsilon]$ as well. This also implies 
\[
\left\|\omega(t)\right\|_{L^{\infty}} \leq C\left\|v(t)\right\|_{H^3} \leq C\left\|\omega_{0}\right\|_{H^{3}}\exp\left[C\mathcal{I}(t)\right]+C\mathcal{I}(t)
\]
for all $t\in[0, T^{\ast}-\epsilon]$. This contradicts the assumption that $\omega(t,x) \to -\infty$ as $t \to T^{\ast} < \infty$ because $\mathcal{I}(t)$ is bounded for all $t>0$ and thus the solution cannot blow up in finite time.
\end{proof}

%%%%%%%%%%%%%%%%%%%%%%%%%%%%%%%%%%%%%%5
\subsection{Long-time behavior of the difference between two solutions}
%%%%%%%%%%%%%%%%%%%%%%%%%%%%%%%%%%%%%%%5
In \cite{Ohkitani}, by tracing the evolution of vorticity for different values of $\alpha$, Ohkitani establishes a continuous transition that links the two systems, showing that the qualitative features of the flow are insensitive to small perturbations in the parameters. To verify this approach, we investigate the long-time behaviors of the two different velocities according to the angle parameter $\alpha$. Let $(v_1,\alpha_1)$ and $(v_2,\alpha_2)$ be two solutions of each below equations:
\[
\begin{split}
\partial_t v_1+\mathbb{P}(\mathcal{R}_{-\alpha_{1}}(v_1)\cdot \nabla v_1)&=\Delta v_1, \quad v_{1}(0,x)=v_{(1)}(x)\\
 \partial_t v_2+\mathbb{P}(\mathcal{R}_{-\alpha_{2}}(v_2)\cdot \nabla v_2)&=\Delta v_2, \quad v_{2}(0,x)=v_{(2)}(x)
\end{split}
\]
with $\dv v_{1}= \dv v_{2}=0$. Let $\widetilde{v}(t,x)=v_1(t,x)-v_2(t,x)$. Then, $\widetilde{v}$ satisfies
\eqn \label{eq;difvor}
\partial_t \widetilde{v}+\mathbb{P}[(\mathcal{R}_{-\alpha_{1}}-\mathcal{R}_{-\alpha_{2}})(v_1)\cdot \nabla v_1]+\mathbb{P}[\mathcal{R}_{-\alpha_{2}}(\widetilde{v})\cdot \nabla v_1]+\mathbb{P}[\mathcal{R}_{-\alpha_{2}}(v_{2})\cdot \nabla \widetilde{v}]=\Delta \widetilde{v}
\een
with $\widetilde{v}_{0}(x)=v_{(1)}(x)-v_{(2)}(x)$. Since $v_{1}$ and $v_{2}$ satisfy (\ref{Global uniform bound}), we now define the following quantity: 
\[
\begin{split}
\mathcal{J}^{(i)}(t)&=\left\|\omega_{(i)}\right\|^{2}_{H^{2}}\exp\left[C\|v_{(i)}\|^2_{L^2}\left(1+\frac{[\sin\alpha_{i}]}{2}\left\|\omega^{+}_{(i)}\right\|_{L^\infty}t\right)^2\right] \\
&+\|v_{(i)}\|^2_{L^2}\exp\left[C\left(1+\frac{[\sin\alpha_{i}]}{2}\left\|\omega^{+}_{(i)}\right\|_{L^\infty}t\right)^2\right], \quad i=1,2,
\end{split}
\]
where $\omega_{(i)}$ denotes the vorticity associated with the velocity field $v_{(i)}$, $i=1,2$.

\begin{theorem}\upshape \label{Theorem Difference bound}
$\widetilde{v}$ satisfies the following bound for all $t>0$:
\eqn \label{Global uniform bound New}
\left\|\widetilde{v}(t)\right\|^{2}_{H^{3}}+\int^{t}_{0}\left\|\nabla \widetilde{v}(\tau)\right\|^{2}_{H^{3}}d\tau \leq \left[\left\|\widetilde{v}_{0}\right\|^2_{H^{3}}+C_{\alpha}\,t\, \mathcal{J}^{(1)}(t)\right]\exp\left[C\left(\mathcal{J}^{(1)}(t)+\mathcal{J}^{(2)}(t)\right)\right].
\een
where $C_{\alpha}=C\max\{|\cos\alpha_1-\cos\alpha_2|,|\sin\alpha_1-\sin\alpha_2|\}$.
\end{theorem}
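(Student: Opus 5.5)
We run an $H^3$ energy estimate for $\widetilde v$ directly on \eqref{eq;difvor}, using Theorem \ref{GWP theorem} for $v_1$ and $v_2$ as black boxes. For $k=0,1,2,3$ we apply $D^k$ to \eqref{eq;difvor} and take the $L^2$ inner product with $D^k\widetilde v$. The Leray projection disappears because $\mathbb{P}$ is self-adjoint, commutes with $D^k$, and $\dv \widetilde v=0$. The dissipation gives $\|\nabla \widetilde v\|_{H^3}^2$ on the left. The right side then consists of three terms:
\[
I_k=\int D^k\big[(\mathcal{R}_{-\alpha_1}-\mathcal{R}_{-\alpha_2})(v_1)\cdot\nabla v_1\big]\cdot D^k\widetilde v,\quad II_k=\int D^k\big[\mathcal{R}_{-\alpha_2}(\widetilde v)\cdot\nabla v_1\big]\cdot D^k\widetilde v,\quad III_k=\int D^k\big[\mathcal{R}_{-\alpha_2}(v_2)\cdot\nabla \widetilde v\big]\cdot D^k\widetilde v .
\]
Since $\mathcal{R}_{-\alpha_1}-\mathcal{R}_{-\alpha_2}$ is a constant matrix with entries bounded by $C_\alpha/C$, the forcing term satisfies, by the product estimate \eqref{Product estimates},
\[
|I_k|\le C_\alpha \|v_1\|_{H^3}\|\nabla v_1\|_{H^3}\|\widetilde v\|_{H^3}.
\]
We then split this as $\le \|\widetilde v\|_{H^3}^2 + C_\alpha \|v_1\|^2_{H^3}\|\nabla v_1\|^2_{H^3}$. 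Alternatively, for $k=3$ we can integrate one derivative by parts onto $D^3\widetilde v$, so that the top-order norm is absorbed into the dissipation. In either form, this term produces the additive contribution $C_\alpha t\,\mathcal{J}^{(1)}(t)$ after time integration, once we use $\sup_{[0,t]}\|v_1\|_{H^3}^2\lesssim \mathcal{J}^{(1)}(t)$ from \eqref{Global uniform bound}.

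For $III_k$ we argue exactly as in \eqref{energy estimates 1}. We split off the commutator $[D^k,\mathcal{R}_{-\alpha_2}(v_2)]\cdot\nabla\widetilde v$ and bound it with Lemma \ref{Commutator lemma} by $C(\|\nabla v_2\|_{L^\infty}\|\widetilde v\|_{H^3}+\|D^kv_2\|_{L^4}\|\nabla\widetilde v\|_{L^4})\|\widetilde v\|_{H^3}$. The transport part reduces, via \eqref{div curl}, to $\tfrac{\sin\alpha_2}{2}\int\omega_2|D^k\widetilde v|^2$. Altogether
\[
|III_k|\le C\|v_2\|_{H^3}\|\widetilde v\|_{H^3}^2 .
\]
For $II_k$ no transport structure is available, since the derivative falls on $v_1$. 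Instead we use the Kato–Ponce estimate \eqref{Product estimates 2} together with the Sobolev bound \eqref{L infty bound}:
\[
|II_k|\le C\big(\|\widetilde v\|_{L^\infty}\|\nabla v_1\|_{H^3}+\|\widetilde v\|_{H^3}\|\nabla v_1\|_{L^\infty}\big)\|\widetilde v\|_{H^3}\le C\|v_1\|_{H^4}\|\widetilde v\|_{H^3}^2 .
\]
Summing over $k$ gives the differential inequality
\[
\frac{d}{dt}\|\widetilde v\|_{H^3}^2+\|\nabla\widetilde v\|_{H^3}^2\le C\big(1+\|v_1\|_{H^4}+\|v_2\|_{H^3}\big)\|\widetilde v\|_{H^3}^2 + C_\alpha\|v_1\|_{H^3}^2\|\nabla v_1\|_{H^3}^2 .
\]
Gronwall's inequality then yields \eqref{Global uniform bound New}, provided the time integrals of the coefficients are controlled by $\mathcal{J}^{(1)}+\mathcal{J}^{(2)}$. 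This is where \eqref{Global uniform bound} enters: it bounds $\int_0^t\|\nabla v_i\|_{H^3}^2$ and $\sup\|v_i\|_{H^3}^2$ by $\mathcal{J}^{(i)}(t)$. The integrand $\|v_1\|_{H^4}$ appears to the first power, so we use $\|v_1\|_{H^4}\le 1+\|v_1\|_{H^4}^2$.

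The main obstacle is the bookkeeping of time growth. The linear $1$ and the Cauchy–Schwarz splittings produce factors like $e^{Ct}$ and $t\sup\|v_i\|^2_{H^3}$ in the Gronwall exponent. These have to be absorbed into $C(\mathcal{J}^{(1)}+\mathcal{J}^{(2)})$, which is possible because each $\mathcal{J}^{(i)}$ contains exponentials of quadratic polynomials in $t$. In particular, the second summand of $\mathcal{J}^{(i)}$ carries $\exp[C(1+\cdots t)^2]$ precisely so that the $L^2$-level terms can absorb such growth. If that absorption fails, we would instead avoid the extra $1$ by estimating $II_0,III_0$ through $\|\nabla v_i\|_{L^\infty}$ and Ladyzhenskaya \eqref{L4 bound}, keeping every Gronwall coefficient quadratic in $\|\nabla v_i\|_{H^3}$.
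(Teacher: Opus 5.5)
Your decomposition and tools are the paper's: the same three terms, commutator plus \eqref{div curl} for (III), Kato--Ponce for (II), and the product estimate for (I). Your individual inequalities are true, but they do not close to the stated bound, and the absorption step you rely on is false.

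\textbf{The time growth cannot be absorbed.} Your Gronwall coefficient $C(1+\|v_1\|_{H^4}+\|v_2\|_{H^3})$ has time integral of order at least $t\,(1+\sup_{[0,t]}\|v_2\|_{H^3})$. But \eqref{Global uniform bound} only bounds $\int_0^t\|\nabla v_i\|_{H^3}^2$ and $\sup_{[0,t]}\|v_i\|_{H^3}^2$ by $\mathcal{J}^{(i)}(t)$, with no factor $t$. That $t$ cannot be hidden in $C\mathcal{J}^{(i)}(t)$:
\begin{itemize}
\item If $\omega_{(1)},\omega_{(2)}\le 0$, then $\omega^+_{(i)}=0$ and $\mathcal{J}^{(i)}$ does not depend on $t$. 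The theorem then asserts growth at most linear in $t$, while your inequality only yields $e^{Ct}$.
\item Even when $\omega^+_{(i)}\neq 0$, an inequality $t\le C\mathcal{J}^{(i)}(t)$ needs a constant depending on $\|v_{(i)}\|_{L^2}$ and $\|\omega^+_{(i)}\|_{L^\infty}$, and it fails for small data.
\end{itemize}

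\textbf{The forcing has the wrong form.} Your first split of (I) gives the forcing $C_\alpha\int_0^t\|v_1\|_{H^3}^2\|\nabla v_1\|_{H^3}^2\,d\tau\lesssim C_\alpha(\mathcal{J}^{(1)})^2$. This carries no factor $t$. For $H^3$ data, $\int_0^t\|\nabla v_1\|_{H^3}^2$ is not $O(t)$ as $t\to 0$, so this is not bounded by $C_\alpha t\,\mathcal{J}^{(1)}(t)$. So the claim that ``either form'' of (I) produces $C_\alpha t\,\mathcal{J}^{(1)}(t)$ is wrong for the first form.

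\textbf{The missing idea.} You need to spend the dissipation in every term, so that the only Gronwall coefficient is $\|\nabla v_1\|_{H^3}^2+\|\nabla v_2\|_{H^3}^2$ and the factor $t$ enters only through the forcing. This is what the paper does:
\begin{itemize}
\item Split (I) as $C\|\nabla v_1\|_{H^3}^2\|\widetilde v\|_{H^3}^2+C_\alpha\|v_1\|_{H^3}^2$. The second piece integrates to at most $C_\alpha t\,\mathcal{J}^{(1)}(t)$.
\item In (II) and (III) with $k\ge 1$, bound one factor $\|D^k\widetilde v\|_{L^2}$ by $\|\nabla\widetilde v\|_{H^3}$. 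Also use $\|\nabla v_1\|_{L^\infty}\le C\|\nabla v_1\|_{H^3}$, so no $\|v_1\|_{L^2}$, and hence no $H^4$ norm, ever appears.
\item For $k=0$ and for the transport term $\int\omega_2|D^k\widetilde v|^2$, use \eqref{L4 bound} in the form $\|D^k\widetilde v\|_{L^4}^2\le C\|D^k\widetilde v\|_{L^2}\|D^{k+1}\widetilde v\|_{L^2}$, paired with $\|\nabla v_1\|_{L^2}$ and $\|\omega_2\|_{L^2}$.
\end{itemize}
Young's inequality then gives $\frac{d}{dt}\|\widetilde v\|_{H^3}^2+\|\nabla\widetilde v\|_{H^3}^2\le C(\|\nabla v_1\|_{H^3}^2+\|\nabla v_2\|_{H^3}^2)\|\widetilde v\|_{H^3}^2+C_\alpha\|v_1\|_{H^3}^2$. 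Gronwall's inequality yields exactly \eqref{Global uniform bound New}.

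Your closing fallback points in this direction, but you state it only for $II_0$ and $III_0$, and partly via $\|\nabla v_i\|_{L^\infty}$, which again gives a first-power coefficient. The $k\ge 1$ terms and the split of (I) must be changed as well.
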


\begin{proof}
We apply the derivative $D^k$, $k=0,1,2,3$, to (\ref{eq;difvor}) and we take $L^2$ inner product of the resulting equation with $D^{k}v$.  Then, we have 
\begin{equation*}
\begin{split}
\frac{1}{2} \frac{d}{dt}\|\widetilde{v}\|^2_{H^3}+\|\nabla \widetilde{v}\|^2_{H^3}=&-\sum^{3}_{k=0}\int D^{k}[(\mathcal{R}_{-\alpha_{1}}-\mathcal{R}_{-\alpha_{2}})(v_1)\cdot \nabla v_1]\cdot D^{k}\widetilde{v}\\
&-\sum^{3}_{k=0}\int D^{k}[\mathcal{R}_{-\alpha_{2}}(\widetilde{v})\cdot \nabla v_1]\cdot D^{k}\widetilde{v}-\sum^{3}_{k=0}\int D^{k}[\mathcal{R}_{-\alpha_{2}}(v_{2})\cdot \nabla \widetilde{v}]\cdot D^{k}\widetilde{v}\\
 =& \text{(I)+(II)+(III)}. 
\end{split} 
\end{equation*}
By (\ref{Product estimates}), we first bound $\text{(I)}$:
\[
\begin{split}
\text{(I)} &\leq C \left\|(\mathcal{R}_{-\alpha_{1}}-\mathcal{R}_{-\alpha_{2}})(v_1)\right\|_{H^{3}} \left\|\nabla v_1\right\|_{H^{3}} \left\|\widetilde{v}\right\|_{H^{3}} \leq C \left\|\nabla v_1\right\|^2_{H^{3}} \left\|\widetilde{v}\right\|^2_{H^{3}} +C\left\|(\mathcal{R}_{-\alpha_{1}}-\mathcal{R}_{-\alpha_{2}})(v_1)\right\|^2_{H^{3}} \\
&\leq C \left\|\nabla v_1\right\|^2_{H^{3}} \left\|\widetilde{v}\right\|^2_{H^{3}} +C_{\alpha}\left\|v_1\right\|^2_{H^{3}}.
\end{split}
\]
We now estimate $\text{(II)}$ using  (\ref{Product estimates 2}):
\[
\begin{split}
\text{(II)}&\leq C\sum^{3}_{k=0}\left(\left\|\nabla v_{1}\right\|_{L^{2}}\left\|D^{k}\mathcal{R}_{-\alpha_{2}}(\widetilde{v})\right\|_{L^{4}} \left\|D^{k}\widetilde{v}\right\|_{L^{4}}+\left\|D^{k+1}v_{1}\right\|_{L^{2}} \left\|\mathcal{R}_{-\alpha_{2}}(\widetilde{v})\right\|_{L^{4}} \left\|D^{k}\widetilde{v}\right\|_{L^{4}}\right)\\
&\leq C\sum^{3}_{k=0}\left(\left\|\nabla v_{1}\right\|_{L^{2}}\left\|D^{k}\widetilde{v}\right\|^2_{L^{4}}+\left\|D^{k+1}v_{1}\right\|_{L^{2}} \left\|\widetilde{v}\right\|^2_{L^{4}} +\left\|D^{k+1}v_{1}\right\|_{L^{2}} \left\|D^{k}\widetilde{v}\right\|^2_{L^{4}}\right)\\
&\leq C\left\|\nabla v_{1}\right\|^2_{H^{3}}\left\|\widetilde{v}\right\|^2_{H^{3}}+\frac{1}{4}\left\|\nabla \widetilde{v}\right\|^2_{H^{3}}.
\end{split}
\]
We finally bound $\text{(III)}$ using Lemma \ref{Commutator lemma} and (\ref{div curl}):
\[
\begin{split}
\text{(III)}&=-\sum^{3}_{k=0}\int [\mathcal{R}_{-\alpha_{2}}(v_{2})\cdot \nabla D^{k}\widetilde{v}]\cdot D^{k}\widetilde{v}-\sum^{3}_{k=1}\int \left\{[D^{k}, \mathcal{R}_{-\alpha_{2}}(v_{2})]\cdot \nabla \widetilde{v}\right\}\cdot D^{k}\widetilde{v}\\
&=\frac{[\sin\alpha_{2}]}{2}\sum^{3}_{k=0}\int\omega_{2}\left|D^{k}\widetilde{v}\right|^2-\sum^{3}_{k=1}\int \left\{[D^{k}, \mathcal{R}_{-\alpha_{2}}(v_{2})]\cdot \nabla \widetilde{v}\right\}\cdot D^{k}\widetilde{v}\\
&\leq \left\|\omega_{2}\right\|_{L^{2}}\sum^{3}_{k=0}\left\|D^{k}\widetilde{v}\right\|^2_{L^{4}} +C\sum^{3}_{k=1}\left\|D^{k}\widetilde{v}\right\|_{L^{2}}\left(\left\|\nabla \mathcal{R}_{-\alpha_{2}}(v_{2})\right\|_{L^{\infty}}\left\|D^{k}\widetilde{v}\right\|_{L^{2}}+\left\|D^{k}\mathcal{R}_{-\alpha_{2}}(v_{2})\right\|_{L^{2}}\left\|\nabla \widetilde{v}\right\|_{L^{\infty}}\right)\\
&\leq C \left\|\omega_{2}\right\|_{L^{2}}\left\|\widetilde{v}\right\|_{H^{3}}\left\|\nabla \widetilde{v}\right\|_{H^{3}}+C\left\|\nabla \mathcal{R}_{-\alpha_{2}}(v_{2})\right\|_{L^{\infty}}\left\|\widetilde{v}\right\|^2_{H^{3}}+C \left\|\nabla \mathcal{R}_{-\alpha_{2}}(v_{2})\right\|_{H^{3}}\left\|\widetilde{v}\right\|_{H^{3}}\left\|\nabla \widetilde{v}\right\|_{H^{3}}\\
&\leq C \left\|\nabla v_{2}\right\|^2_{H^{3}}\left\|\widetilde{v}\right\|^2_{H^{3}}+\frac{1}{4}\left\|\nabla \widetilde{v}\right\|^2_{H^{3}}.
\end{split}
\]
From these bounds, we have
\[
\frac{d}{dt}\|\widetilde{v}\|^2_{H^3}+\|\nabla \widetilde{v}\|^2_{H^3}\leq C\left(\left\|\nabla v_1\right\|^2_{H^{3}}+\left\|\nabla v_2\right\|^2_{H^{3}} \right)\|\widetilde{v}\|^2_{H^3} +C_{\alpha}\left\|v_1\right\|^2_{H^{3}}.
\]
By Gr\"ownwall's inequality, we obtain (\ref{Global uniform bound New}) for all $t>0$.
\end{proof}

\begin{remark}\upshape \label{Remark 4.1}
If $\omega_{0}\leq 0$, (\ref{L2 bound of u with vorticity}) is replaced with
\[
\frac{1}{2}\frac{d}{dt}\|v\|^2_{L^2}+\|\nabla v\|^2_{L^2}=\frac{1}{2}\int [\sin\alpha]\omega|v|^2\leq 0
\]
because $\omega\leq 0$ by Lemma  \ref{Max Principle Lemma} below. From this, it can be shown that 
\[%eqn \label{Uniform bounds with negative vorticity New}
\left\|v(t)\right\|^{2}_{H^{3}}+\int^{t}_{0}\left\|\nabla v(\tau)\right\|^{2}_{H^{3}}d\tau
\leq \left\|v_{0}\right\|^{2}_{L^{2}}+C\left\|\omega_{0}\right\|^{2}_{H^{2}}\exp\left[C\left\|v_{0}\right\|^{2}_{L^{2}}\right].
\]%een
So, the time dependency on the right-hand side of (\ref{Global uniform bound New}) is eliminated and  (\ref{Global uniform bound New}) can be reduced to 
\[
\left\|\widetilde{v}(t)\right\|^{2}_{H^{3}}+\int^{t}_{0}\left\|\nabla \widetilde{v}(\tau)\right\|^{2}_{H^{3}}d\tau \leq \left[\left\|\widetilde{v}_{0}\right\|^2_{H^{3}}+CC_{\alpha}\, t\,  \mathcal{I}_{(1)}\right]\exp\left[C\left(\mathcal{I}_{(1)}+\mathcal{I}_{(2)}\right)\right]
\]
for all $t>0$ where 
\[
\mathcal{I}_{(1)}=\left\|v_{(1)}\right\|^{2}_{L^{2}}+C\left\|\omega_{(1)}\right\|^{2}_{H^{2}}\exp\left[C\left\|v_{(1)}\right\|^{2}_{L^{2}}\right], \quad \mathcal{I}_{(2)}=\left\|v_{(2)}\right\|^{2}_{L^{2}}+C\left\|\omega_{(2)}\right\|^{2}_{H^{2}}\exp\left[C\left\|v_{(2)}\right\|^{2}_{L^{2}}\right].
\]
\end{remark}

%%%%%%%%%%%%%%%%%%%%%%%%%
\section{Temporal decay rates}\label{sec:5}
%%%%%%%%%%%%%%%%%%%%%%%%
Since the solution in Theorem \ref{GWP theorem} exists globally-in-time,  the natural question is twofold: do the solutions decay as $t\rightarrow \infty$, and what is the rate of this decay?  It is  well-known that the long time behavior of solutions to dissipative equations can be controlled by imposing some conditions on the initial data. Motivated by \cite{Guo Wang, Schonbek 1986}, we investigate the temporal decay rates of (\ref{Vis Eq of v}). Before stating our result, we first give the maximum principle for parabolic equations.

\begin{lemma}\upshape \label{Max Principle Lemma}
Suppose that $f:[0,T)\times \mathbb{R}^{2} \rightarrow \mathbb{R}$ be a function in $C([0,T);H^2)\cap C^1([0,T);L^2) $ with $f_0\in H^2$ and let $v(t,x)$ be a vector field on $[0,T)\times \mathbb{R}^{2}$ which belongs to $L^1(0,T;W^{1,\infty})$. Suppose
\begin{equation}\label{maxprin}
f_{t}+v\cdot \nabla f-\Delta f\leq 0 
\end{equation}
for a.e. $(x,t)$ and $f_{0}(x)\leq 0$ for all $x\in\mathbb{R}^{2}$. Then, 
\[
f(t,x)\leq 0 \quad \text{for all $t\in [0,T)$ and $x\in\mathbb{R}^{2}$}.
\]
\end{lemma}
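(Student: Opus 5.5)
The plan is an $L^2$ energy estimate for the positive part $f^+$, in the same spirit as the proof of Lemma \ref{Decay lemma of omega}. The vector field $v$ here is not assumed divergence-free, so the transport term will generate a $\dv v$ factor. We will control that factor by the assumption $v\in L^1(0,T;W^{1,\infty})$.

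\textbf{Step 1: test the inequality against $f^+$.} Since $f\in C^1([0,T);L^2)$, Lemma \ref{L2 lemma plus} applies on $[0,t]$ for any $t<T$. Because $f^+\ge 0$, multiplying (\ref{maxprin}) by $f^+$ and integrating preserves the inequality:
\[
\frac{1}{2}\frac{d}{dt}\left\|f^{+}\right\|^{2}_{L^{2}}-\int f^{+}\Delta f\le -\int (v\cdot \nabla f) f^{+}.
\]
Since $f(t)\in H^2\subset H^1$, Lemma \ref{L2 lemma grad plus} gives $-\int f^+\Delta f=\|\nabla f^+\|^2_{L^2}\ge 0$.

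\textbf{Step 2: handle the transport term.} By Stampacchia's lemma, $\nabla f^+=\chi_{\{f>0\}}\nabla f$. Hence $(v\cdot\nabla f)f^+=v\cdot\nabla \frac{|f^+|^2}{2}$. Integrating by parts, which is justified since $|f^+|^2\in W^{1,1}$ and $v\in W^{1,\infty}$, gives
\[
-\int (v\cdot\nabla f)f^+=\frac{1}{2}\int (\dv v)|f^+|^2\le \frac{1}{2}\|\nabla v\|_{L^\infty}\|f^+\|^2_{L^2}.
\]
Combining Steps 1 and 2,
\[
\frac{d}{dt}\|f^+\|^2_{L^2}\le \|\nabla v(t)\|_{L^\infty}\|f^+\|^2_{L^2}.
\]

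\textbf{Step 3: conclude with Gronwall.} The function $\|\nabla v(t)\|_{L^\infty}$ is integrable on $[0,t]$ for every $t<T$, since $v\in L^1(0,T;W^{1,\infty})$. Gronwall's inequality therefore yields
\[
\|f^+(t)\|^2_{L^2}\le \|f_0^+\|^2_{L^2}\exp\Bigl[\int_0^t\|\nabla v\|_{L^\infty}\Bigr].
\]
The right-hand side vanishes because $f_0\le 0$. So $f^+(t)=0$ a.e. for each $t$. Since $f(t)\in H^2\subset C$, this holds for every $x$, and the lemma follows.

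\textbf{Main obstacle.} The delicate step is justifying the manipulations at the available regularity. Three points need care. First, the differential inequality holds only a.e., and it must be integrated against $f^+$ in time-integrated form, $\int_0^t\int$, using the integrated identity from the proof of Lemma \ref{L2 lemma plus}. Second, the integration by parts in the transport term needs $v\in L^\infty$ as well as $\nabla v\in L^\infty$ to make $\int v\cdot\nabla(|f^+|^2)$ absolutely convergent. This is guaranteed since $W^{1,\infty}\ni v$ and $f^+\in H^1$, so the products lie in $L^1$. Third, the boundary terms at infinity must vanish; this follows by approximating with cutoffs, using $|f^+|^2 v\in W^{1,1}$.
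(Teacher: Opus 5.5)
Your proposal is correct and follows essentially the same route as the paper. You test the inequality against $f^+$, invoke Lemmas \ref{L2 lemma plus} and \ref{L2 lemma grad plus}, integrate the transport term by parts to get $\frac12\int(\dv v)|f^+|^2$, and close with Gr\"onwall using $f_0^+=0$. The differences are cosmetic: you bound $\dv v$ by $\|\nabla v\|_{L^\infty}$ (up to a harmless constant) where the paper keeps $\|\dv v\|_{L^\infty}$, and you spell out the regularity justifications and the continuity step ($H^2\subset C$) that upgrades $f^+(t)=0$ a.e.\ to the pointwise conclusion, which the paper leaves implicit.
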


\begin{proof}
We now take the inner product of (\ref{maxprin}) with $f^+$. By Lemma {\ref{L2 lemma plus}} and Lemma {\ref{L2 lemma grad plus}}, 
\[
\frac{1}{2} \frac{d}{dt}\left\|f^+\right\|^2_{L^2} +\left\|\nabla f^{+}\right\|^2_{L^2}\leq -\int (v\cdot\nabla f) f^+ = \int \frac{1}{2}(\dv v)|f^{+}|^2\leq \frac{ \left\|\dv v\right\|_{L^\infty}}{2} \left\| f^{+}\right\|^2_{L^2}.
\]
By Gr\"onwall's inequality with $f^{+}_{0}=0$,  we have
\[
\left\|f^{+}\right\|^2_{L^2} \leq\exp\left[ \int^t_0\left\|\dv v(\tau)\right\|_{L^\infty}d\tau\right]\left\|f^{+}_{0}\right\|^2_{L^2}=0.
\]
So, we conclude that $f^{+}=0$ for all $(x,t)$ and thus complete the proof of Lemma \ref{Max Principle Lemma}. 
\end{proof}

We also rewrite $\mathcal{R}_{-\alpha}(v)\cdot \nabla v$ as follows. From (\ref{Rotation operator}),
\[
\begin{split}
\mathcal{R}_{-\alpha}(v)\cdot \nabla v=\begin{pmatrix}
[\cos \alpha] v_{1}\partial_{1}v_{1} -[\sin\alpha]v_{2}\partial_{2}v_{2} -[\sin\alpha] v_{1}\partial_{2}v_{1} +[\cos\alpha]v_{2}\partial_{2}v_{1}\\
[\cos \alpha] v_{1}\partial_{1}v_{2} +[\sin\alpha]v_{2}\partial_{1}v_{2} +[\sin\alpha] v_{1}\partial_{1}v_{1} +[\cos\alpha]v_{2}\partial_{2}v_{2}
\end{pmatrix}.
\end{split}
\]
By using $\dv v=0$, we derive 
\[
v_{2}\partial_{2}v_{1}=\partial_{2}(v_{1}v_{2})-v_{1}\partial_{2}v_{2}=\partial_{2}(v_{1}v_{2})+v_{1}\partial_{1}v_{1}, \quad v_{1}\partial_{1}v_{2}=\partial_{1}(v_{1}v_{2})-v_{2}\partial_{1}v_{1}=\partial_{1}(v_{1}v_{2})+v_{2}\partial_{2}v_{2}
\]
from which we obtain 
\eqn\label{div form of advection}
\mathcal{R}_{-\alpha}(v)\cdot \nabla v=\begin{pmatrix}
[\cos \alpha] \partial_{1}\left(v^{2}_{1}\right) -\frac{[\sin\alpha]}{2}\partial_{2}\left(v^2_{2}\right) -\frac{[\sin\alpha]}{2} \partial_{2}\left(v^2_{1}\right) +[\cos\alpha]\partial_{2}(v_{1}v_{2})\\
[\cos\alpha]\partial_{2}\left(v^2_{2}\right)+\frac{[\sin\alpha]}{2}\partial_{1}\left(v^2_{2}\right) +\frac{[\sin\alpha]}{2} \partial_{1}\left(v^2_{1}\right) +[\cos\alpha]\partial_{1}(v_{1}v_{2})
\end{pmatrix}.
\een
So, each term in $\mathcal{R}_{-\alpha}(v)\cdot \nabla v$ can be expressed as a linear combination of the derivatives of the products $v_{i}v_{j}$, $i,j=1,2$. This expression is essential to derive temporal decay rates of $v$. In fact, if we obtain an energy inequality of the form
\eqn \label{Energy inequality}
\frac{1}{2}\frac{d}{dt}\|v\|^2_{L^2}+\|\nabla v\|^2_{L^2}\leq 0,
\een
 we can follow \cite[Section 3]{Niche Schonbek} to derive 
\[%eqn \label{L2 decay of v negative omega}
\|v(t)\|_{L^2}\leq \frac{C_{0}}{\sqrt{1+t}},
\]%een
where $C_{0}$ only depends on $v_{0}\in L^{2}\cap L^{1}$. However, the assumption $v_0 \in L^2 \cap L^1$ is insufficient here because the energy equality involves the vorticity
\[
\frac{1}{2}\frac{d}{dt}\|v\|^2_{L^2}+\|\nabla v\|^2_{L^2}=\frac{1}{2}\int [\sin\alpha]\omega|v|^2.
\]
Choosing a nonpositive initial vorticity, Lemma \ref{Max Principle Lemma} implies that $\omega(t,x)\leq0$, which ensures (\ref{Energy inequality}).  This argument can be summarized as follows.

\begin{theorem} \label{L2 Decay theorem}\upshape
Let $v_0\in H^3\cap L^1$ with $\dv v_{0}=0$ and $\omega_0\leq0$. Then, $v$ decays in time as follows:
\eqn \label{L2 decay of v negative omega}
\|v(t)\|_{L^2}\leq \frac{C_{0}}{\sqrt{1+t}},
\een
where $C_{0}$ only depends on $v_{0}\in L^{2}\cap L^{1}$.
\end{theorem}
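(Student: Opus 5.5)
The plan is the Fourier splitting method of Schonbek, following \cite[Section 3]{Niche Schonbek}. It has three ingredients: the energy inequality (\ref{Energy inequality}), a pointwise bound on $\widehat v$ at low frequencies, and a closing ODE argument.

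\textbf{Step 1: energy inequality.} By Theorem \ref{GWP theorem} the solution $v\in C([0,\infty);H^3)$ exists globally. Then $\omega$ satisfies (\ref{Eq of omega}), so
\[
\omega_t+\mathcal{R}_{-\alpha}(v)\cdot\nabla\omega-\Delta\omega=-[\sin\alpha]|\nabla v|^2\le 0 .
\]
Since $\mathcal{R}_{-\alpha}(v)\in L^1(0,T;W^{1,\infty})$ by the $H^3$ bound, Lemma \ref{Max Principle Lemma} applied with $f=\omega$ gives $\omega\le 0$ everywhere. Hence the right-hand side of the energy identity is nonpositive and (\ref{Energy inequality}) holds. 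In particular $\|v(t)\|_{L^2}\le\|v_0\|_{L^2}$.

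\textbf{Step 2: low-frequency bound.} Using (\ref{div form of advection}), write $\mathbb{P}(\mathcal{R}_{-\alpha}(v)\cdot\nabla v)=\mathbb{P}\,\partial_j(Q_{ij}(v\otimes v))$ with $Q$ a constant-coefficient bilinear form. Taking the Fourier transform and using $|\widehat{\mathbb{P}}|\le 1$ gives
\[
|\partial_t\widehat v+|\xi|^2\widehat v|\le C|\xi|\,\|v(t)\|^2_{L^2}.
\]
Duhamel's formula then yields
\[
|\widehat v(t,\xi)|\le |\widehat v_0(\xi)|+C|\xi|\int_0^t\|v\|^2_{L^2}\,d\tau\le \|v_0\|_{L^1}+C|\xi|\,t\,\|v_0\|^2_{L^2}.
\]

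\textbf{Step 3: Fourier splitting and closing.} Let $S(t)=\{|\xi|\le r(t)\}$ with $r(t)^2=\frac{m}{2(1+t)}$ and $m$ to be chosen. From Plancherel and (\ref{Energy inequality}),
\[
\frac{d}{dt}\|v\|^2_{L^2}+r^2\|v\|^2_{L^2}\le r^2\int_{S(t)}|\widehat v|^2\,d\xi .
\]
Inserting the bound of Step 2, the right-hand side is at most
\[
C r^2\bigl(r^2\|v_0\|^2_{L^1}+r^4t^2\|v_0\|^4_{L^2}\bigr)\lesssim (1+t)^{-2}.
\]
The second term is bounded by $(1+t)^{-1}$ (times $r^2$) at first, which gives only a preliminary rate. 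So I would first obtain $\|v\|^2_{L^2}\lesssim \ln(e+t)^{2}/(1+t)$ or some similar weak decay. I would then reinsert this bound into $\int_0^t\|v\|^2_{L^2}\,d\tau$ in Step 2, so that $|\xi|\int_0^t\|v\|^2\,d\tau\lesssim r\log^2(e+t)$, and bootstrap. With $m=3$, multiply by $(1+t)^m$ and integrate to get
\[
\|v(t)\|^2_{L^2}\le C_0^2(1+t)^{-1}.
\]

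\textbf{Main obstacle.} The hardest part is closing the bootstrap in Step 3. The nonlinear term $|\xi|\int_0^t\|v\|^2\,d\tau$ is critical in two dimensions, so the first pass yields a logarithmic loss, and the second iteration must remove it. I would follow \cite{Niche Schonbek} and \cite{Schonbek 1986} verbatim, since the only model-specific inputs are the divergence form (\ref{div form of advection}) and the energy inequality from Step 1.
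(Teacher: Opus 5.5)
Your Steps 1 and 2 are what the paper uses: the maximum principle gives $\omega\le 0$, hence (\ref{Energy inequality}), and the divergence form (\ref{div form of advection}) gives the low-frequency bound. The paper then defers the splitting argument to \cite[Section 3]{Niche Schonbek}.

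The gap is in your Step 3: the bootstrap has no valid starting point. With $r(t)^2\sim(1+t)^{-1}$ and the crude bound $\int_0^t\|v\|^2_{L^2}\,d\tau\le t\|v_0\|^2_{L^2}$, you get $\int_{S(t)}|\xi|^2t^2\,d\xi\sim r^4t^2\sim 1$. So the second contribution to the right-hand side is of size $r^2\sim(1+t)^{-1}$, not $(1+t)^{-2}$ as your display claims. After multiplying by $(1+t)^m$ and integrating, this yields only $(1+t)^m\|v(t)\|^2_{L^2}\lesssim(1+t)^m$, i.e. $\|v(t)\|_{L^2}\le C$ with no decay at all. The preliminary rate $\ln^2(e+t)/(1+t)$ does not come out.

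Even a logarithmic preliminary rate would not suffice if you simply reinsert it. From $\|v\|^2_{L^2}\lesssim\ln^{-k}(e+t)$ you get $\int_0^t\|v\|^2_{L^2}\lesssim t\ln^{-k}(e+t)$. The same inequality then returns only $\ln^{-2k}(e+t)$, so the iteration improves the power of the logarithm but never reaches a power of $t$.

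What is missing is the genuinely two-dimensional refinement of Fourier splitting, which has two parts.

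\textbf{First, show $\|v(t)\|_{L^2}\to0$.} Split with $r(t)^2=\frac{m}{(e+t)\ln(e+t)}$, whose integrating factor is $\ln^m(e+t)$. Now $r^2\cdot r^4t^2\lesssim\frac{1}{(e+t)\ln^3(e+t)}$, and for $m>2$ one obtains $\|v(t)\|^2_{L^2}\lesssim\ln^{-2}(e+t)$.

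\textbf{Second, close with a Gronwall argument on $g$.} Set $g(t)=\int_0^t\|v\|^2_{L^2}\,d\tau$ and take $r^2=\frac{m}{1+t}$ with $m=3$. Your splitting inequality then integrates, using that $g$ is nondecreasing, to
\[
g'(t)=\|v(t)\|^2_{L^2}\le\|v_0\|^2_{L^2}(1+t)^{-m}+C_1(1+t)^{-1}+C_2(1+t)^{-2}g(t)^2 .
\]
Do not substitute the bound on $g$ into the quadratic term. Instead use $C_2\,g(t)/(1+t)\le\epsilon$ for $t\ge T$; this is exactly where $g(t)=o(t)$ from the first part enters. This gives $g'\le C(1+t)^{-1}+\epsilon\,g/(1+t)$, hence $g(t)\lesssim(1+t)^{\epsilon}$. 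Therefore $\|v(t)\|^2_{L^2}\lesssim(1+t)^{-1}+(1+t)^{-2+2\epsilon}\lesssim(1+t)^{-1}$ for $\epsilon\le\frac12$.

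All constants, including $T$, depend only on $\|v_0\|_{L^1}$ and $\|v_0\|_{L^2}$, as the statement requires.
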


Using Theorem \ref{L2 Decay theorem}, we can derive the decay rates of the derivatives of $v$.

 \begin{corollary} \upshape \label{Higher order decay}
 Let $v_0\in H^3\cap L^1$ with $\dv v_{0}=0$ and $\omega_0\leq0$. Then, there exists $T_{0}$ depending on $v_{0}\in L^{2}\cap L^{1}$ such that $v$ decays in time as follows: 
\[
\left\|v(t)\right\|_{\dot{H}^k}\leq \frac{C_{0}}{(1+t)^\frac{k+1}{2}}, \quad k=1,2,3
\]
for all $t>T_{0}$, where $C_{0}$ only depends on $v_0\in H^3\cap L^1$.
\end{corollary}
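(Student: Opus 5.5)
We will prove the decay of $\|v(t)\|_{\dot H^k}$ by induction on $k$, combining energy estimates for the derivatives with the Fourier splitting method of Schonbek. The starting input is Theorem \ref{L2 Decay theorem}, which gives $\|v(t)\|_{L^2}\le C_0(1+t)^{-1/2}$. By Lemma \ref{Max Principle Lemma}, $\omega\le 0$ for all time. Remark \ref{Remark 4.1} then gives a time-independent bound
\[
\|v(t)\|_{H^3}^2+\int_0^t\|\nabla v\|_{H^3}^2\,d\tau\le \mathcal{I}_0,
\]
where $\mathcal{I}_0$ depends only on $v_0\in H^3$.

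\textbf{Derivative energy inequality.} For $k=1,2,3$, apply $D^k$ to (\ref{Vis Eq of v}) and pair with $D^kv$. As in (\ref{energy estimates 1}), the transport part produces $\frac{[\sin\alpha]}{2}\int\omega|D^kv|^2\le 0$, which we drop. The commutator part is bounded using Lemma \ref{Commutator lemma}, (\ref{L4 bound}) and (\ref{Interpolation}). For $k=1$, the energy identity is equivalent to that for $\omega$, and the nonlinear terms are bounded by
\[
C\|\omega\|_{L^2}^2\|\nabla\omega\|_{L^2}\le C\|\omega\|_{L^2}\|\nabla v\|_{L^2}\|\nabla\omega\|_{L^2}.
\]
In general we aim for a bound of the form $C\|v\|_{H^3}\|\nabla D^kv\|_{L^2}^2$, or one with a factor of $\|v\|_{L^2}$ or $\|\nabla v\|_{L^2}$ that decays. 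We then use smallness that holds only for large time. The $L^2$ decay together with the uniform $H^3$ bound gives, by interpolation, $\|\nabla v(t)\|_{L^2}\to 0$. Hence there is $T_0$, depending only on the data, such that for $t>T_0$ the nonlinear term is absorbed into half of the dissipation:
\[
\frac{d}{dt}\|D^kv\|_{L^2}^2+\|\nabla D^kv\|_{L^2}^2\le 0,\qquad t>T_0.
\]
This step is where $T_0$ in the statement comes from.

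\textbf{Fourier splitting.} Let $B(t)=\{\xi:|\xi|^2\le \frac{m}{1+t}\}$ with $m$ large. Then
\[
\|\nabla D^kv\|_{L^2}^2\ge \frac{m}{1+t}\|D^kv\|_{L^2}^2-\frac{m}{1+t}\int_{B(t)}|\xi|^{2k}|\hat v|^2\,d\xi.
\]
The low-frequency integral is at most $\left(\frac{m}{1+t}\right)^{k}\|v\|_{L^2}^2\le C(1+t)^{-k-1}$, by Theorem \ref{L2 Decay theorem}. Multiplying the energy inequality by $(1+t)^{m}$ and integrating from $T_0$ to $t$ gives
\[
(1+t)^m\|D^kv\|_{L^2}^2\le (1+T_0)^m\|D^kv(T_0)\|_{L^2}^2+C\int_{T_0}^t(1+s)^{m-k-2}\,ds.
\]
Choosing $m>k+1$ yields $\|D^kv(t)\|_{L^2}^2\le C(1+t)^{-k-1}$, which is the claimed rate. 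An alternative for the low-frequency part is to bound $\hat v$ pointwise via $L^1$, using the divergence form (\ref{div form of advection}); this gives $|\hat v(\xi,t)|\le C+C|\xi|\int_0^t\|v\|_{L^2}^2$. However, using the $L^2$ decay alone already suffices here, since $\int_{B}|\xi|^{2k}|\hat v|^2\le |B|^{k}\|v\|^2_{L^2}$.

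\textbf{Main obstacle.} The hardest step is the nonlinear estimate at the $\dot H^3$ level. Here $D^3$ falls on $\mathcal{R}_{-\alpha}(v)$, and we must produce a factor that is small for large $t$, such as $\|v\|_{L^\infty}$ or $\|\nabla v\|_{L^2}$, multiplied only by dissipative norms. We plan to use the commutator estimate with $(p_1,p_2)=(2,\infty)$ and $(p_3,p_4)=(\infty,2)$. Then Gagliardo--Nirenberg gives
\[
\|\nabla v\|_{L^\infty}\|D^3v\|_{L^2}^2\le C\|\nabla v\|_{L^2}^{a}\|D^4v\|_{L^2}^{2},
\]
with the factor $\|\nabla v\|_{L^2}^a$, $a>0$, becoming small for large $t$. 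The induction on $k$ ensures that the lower-order decay is available when needed.
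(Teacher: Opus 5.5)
\textbf{Gap in the absorption step.} The step you single out as the main obstacle does not close as written. The small factor you rely on, $\|\nabla v\|_{L^2}$, is scale-invariant in two dimensions, so it cannot be traded against the dissipation. The inequality $\|\nabla v\|_{L^\infty}\|D^3v\|_{L^2}^2\le C\|\nabla v\|_{L^2}^{a}\|D^4v\|_{L^2}^{2}$ is false for every $a$. Replacing $v$ by $\mu v$ forces $a=1$. Replacing $v$ by $v(\lambda\,\cdot)$ then multiplies the left side by $\lambda^{5}$ and the right side by $\lambda^{6}$, so letting $\lambda\to0$ gives a contradiction.

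The same defect appears at $k=1$. There, $C\|\omega\|_{L^2}\|\nabla v\|_{L^2}\|\nabla\omega\|_{L^2}$ contains only one power of $\|\nabla\omega\|_{L^2}$, so smallness of $\|\nabla v\|_{L^2}$ does not let you absorb it into $\frac12\|\nabla\omega\|_{L^2}^2$.

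\textbf{The fix.} The dimensionally correct small factor is $\|v\|_{L^2}$, which Theorem \ref{L2 Decay theorem} controls directly.
At $k=3$, Gagliardo--Nirenberg gives $\|\nabla v\|_{L^\infty}\le C\|v\|_{L^2}^{1/2}\|D^4v\|_{L^2}^{1/2}$ and $\|D^3v\|_{L^2}\le C\|v\|_{L^2}^{1/4}\|D^4v\|_{L^2}^{3/4}$. Together these give the bound $C\|v\|_{L^2}\|D^4v\|_{L^2}^2$.
At $k=1$, $\|\nabla v\|_{L^2}^2\le\|v\|_{L^2}\|D^2v\|_{L^2}$ gives $C\|v\|_{L^2}\|\nabla\omega\|_{L^2}^2$.

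This is exactly the paper's bound $C\|\nabla v\|_{L^2}\|v\|_{\dot H^k}\|\nabla v\|_{\dot H^k}\le C\|v\|_{L^2}\|\nabla v\|_{\dot H^k}^2$. There $T_0$ is chosen so that $C\|v(t)\|_{L^2}\le\frac12$; the condition printed as $C\|\nabla v\|_{L^2}\le\frac12$ should be read this way. Choosing $T_0$ through $\|v\|_{L^2}$ is also what makes $T_0$ depend only on $v_0\in L^2\cap L^1$, as the statement claims. Going through $\|\nabla v\|_{L^2}\to0$ via the uniform $H^3$ bound would make $T_0$ depend on $\|v_0\|_{H^3}$.

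\textbf{The rest of your argument.} With that correction, your argument is sound and differs from the paper only in the final step. You use Fourier splitting: the low-frequency part is at most $(m/(1+t))^k\|v\|_{L^2}^2\lesssim(1+t)^{-k-1}$, and $m>k+1$ yields the rate. The resulting constant involves $(1+T_0)^m\|D^kv(T_0)\|_{L^2}^2$, which you correctly control with the uniform $H^3$ estimate of Remark \ref{Remark 4.1}.

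The paper instead interpolates $\|v\|_{\dot H^k}\le\|v\|_{L^2}^{1/(k+1)}\|v\|_{\dot H^{k+1}}^{k/(k+1)}$ and uses the $L^2$ decay. This gives the nonlinear differential inequality $y'+c(1+t)^{1/k}y^{(k+1)/k}\le 0$ for $y=\|v\|_{\dot H^k}^2$, which integrates directly to $y\lesssim(1+t)^{-(k+1)}$.

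Both routes use only the $L^2$ rate, so the induction on $k$ you mention is not actually needed. Fourier splitting is linear and makes the low-frequency mechanism explicit; the ODE route avoids Plancherel and the auxiliary parameter $m$. Dropping $\frac{[\sin\alpha]}{2}\int\omega|D^kv|^2$ using $\omega\le0$ is a harmless simplification; the paper simply bounds that term.
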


\begin{proof}
We apply the derivative $D^k$ to (\ref{Vis Eq of v}) and we take $L^2$ inner product of the resulting equation with $D^{k}v$.  By Lemma \ref{Commutator lemma}, (\ref{L4 bound}) and (\ref{Interpolation}), we have 
\begin{equation*}
\begin{split}
\frac{1}{2} \frac{d}{dt}\|v\|^2_{\dot{H}^k}+\|\nabla v\|^2_{\dot{H}^k} &=-\int \left(\left[D^{k}, \mathcal{R}_{-\alpha}(v)\right]\cdot \nabla v\right)\cdot D^{k}v -\int \left(\mathcal{R}_{-\alpha}(v)\cdot \nabla D^{k}v\right) \cdot D^{k}v\\
& \leq C\left\|\nabla v\right\|_{L^2} \left\|v\right\|_{\dot{H}^k} \left\|\nabla v\right\|_{\dot{H}^k}\leq C\left\|v\right\|_{L^2} \left\|\nabla v\right\|^2_{\dot{H}^k}.
\end{split}
\end{equation*}
By (\ref{L2 decay of v negative omega}), for a sufficiently large time $T_{0}>0$ satisfying $C\left\|\nabla v\right\|_{L^2}\leq \frac{1}{2}$, we have 
\[
 \frac{d}{dt}\|v\|^2_{\dot{H}^k}+\|\nabla v\|^2_{\dot{H}^k}\leq 0
\]
for all $t\ge T_{0}$. Using (\ref{Interpolation}) again, we obtain 
\[
\frac{d}{dt}\|v\|^2_{\dot{H^k}}+C_{0}(1+t)^\frac{1}{k}\|v\|^{2\frac{k+1}{k}}_{\dot{H}^k}\leq 0.
\]
Solving this inequality yields
\[
\left\|v(t)\right\|_{\dot{H}^k}\leq \frac{C_{0}}{(1+t)^\frac{k+1}{2}}
\]
for all $t\ge T_{0}$. This completes the proof of Corollary \ref{Higher order decay}.
\end{proof}

Motivated by \cite{Guo Wang}, we now improve Theorem \ref{L2 Decay theorem} by replacing the $L^1$ assumption with the $\dot{H}^{-\sigma}$ condition for $0 < \sigma < 2$. To address Sobolev spaces with negative regularity, we recall the Hardy-Littlewood-Sobolev inequality \cite{Stein} in the two-dimensional case: for $0<\sigma<1$, we have 
\eqn \label{HLS}
\left\|f\right\|_{\dot{H}^{-\sigma}}\leq C \left\|f\right\|_{L^{p}}, \quad \frac{1}{2}+\frac{\sigma}{2}=\frac{1}{p}.
\een

We now extend the Fourier splitting method \cite{Schonbek 1986} to the setting of Sobolev spaces with negative regularity.

\begin{lemma}\upshape \label{FS Lemma}
Let $\sigma>0$. Suppose we have 
\[
\frac{d}{dt} \left\|f\right\|^{2}_{L^{2}} +\left\|\nabla f\right\|^{2}_{L^{2}}\leq 0, \quad \sup_{t>0}\|f(t)\|_{\dot{H}^{-\sigma}} \leq C_{0}.
\]
Then, $v$ decays in time as follows:
\eqn \label{Decay by FS}
\|f(t)\|_{L^{2}} \leq C_1(1+t)^{-\frac{\sigma}{2}},
\een
where $C_1$ depends only on $C_0$.
\end{lemma}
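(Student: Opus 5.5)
We apply the Fourier splitting method of Schonbek on the frequency side, using the $\dot{H}^{-\sigma}$ bound in place of the usual pointwise bound on $\widehat{f}$ near the origin.

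\emph{Step 1: Fourier form of the inequality.} By Plancherel, the hypothesis reads
\[
\frac{d}{dt}\int |\widehat{f}(\xi,t)|^2\,d\xi+\int |\xi|^2|\widehat{f}(\xi,t)|^2\,d\xi\leq 0 .
\]
Fix a smooth, increasing, nonnegative radius $\rho(t)$, to be chosen as $\rho(t)^2=\frac{m}{1+t}$ with a constant $m>\sigma$. Split the dissipation over $S(t)=\{|\xi|\leq \rho(t)\}$ and its complement. Off $S(t)$ we have $|\xi|^2\geq \rho^2$. Hence
\[
\frac{d}{dt}\|f\|^2_{L^2}+\rho(t)^2\|f\|^2_{L^2}\leq \rho(t)^2\int_{S(t)}|\widehat{f}(\xi,t)|^2\,d\xi .
\]

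\emph{Step 2: the low-frequency integral.} This is the only place the negative-regularity assumption enters, and it replaces the $L^\infty$ bound on $\widehat f$ that comes from $L^1$ data in the classical argument. On $S(t)$ we have $1\leq \rho^{2\sigma}|\xi|^{-2\sigma}$, so
\[
\int_{S(t)}|\widehat{f}|^2\,d\xi\leq \rho(t)^{2\sigma}\int |\xi|^{-2\sigma}|\widehat{f}|^2\,d\xi=\rho(t)^{2\sigma}\|f(t)\|^2_{\dot{H}^{-\sigma}}\leq C_0^2\,\rho(t)^{2\sigma}.
\]
No nonlinear bootstrap is needed, because the uniform-in-time $\dot H^{-\sigma}$ bound is assumed in the statement. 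The only care required is that $\sigma>0$ is arbitrary, and the estimate above is insensitive to that. Note also that the conclusion is stated for $v$ but clearly means $f$.

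\emph{Step 3: integrating factor.} Multiply by $(1+t)^m=\exp\int_0^t\rho^2$ to get
\[
\frac{d}{dt}\Big[(1+t)^m\|f\|^2_{L^2}\Big]\leq C_0^2\,m^{1+\sigma}(1+t)^{m-1-\sigma}.
\]
Integrating from $0$ to $t$ gives
\[
(1+t)^m\|f(t)\|^2_{L^2}\leq \|f(0)\|^2_{L^2}+\frac{C_0^2 m^{1+\sigma}}{m-\sigma}(1+t)^{m-\sigma},
\]
where $m>\sigma$ guarantees the exponent $m-\sigma$ is positive.

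\emph{Step 4: conclusion.} Dividing by $(1+t)^m$ and using $m>\sigma$,
\[
\|f(t)\|^2_{L^2}\leq \|f(0)\|^2_{L^2}(1+t)^{-m}+C(1+t)^{-\sigma}\leq C(1+t)^{-\sigma}.
\]
This is \eqref{Decay by FS}. The constant $C_1$ depends on $C_0$ and on $\|f(0)\|_{L^2}$. To make it depend only on $C_0$, absorb $\|f(0)\|_{L^2}$ as part of the given data, or note that in applications it is controlled by the data.
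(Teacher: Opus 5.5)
Your proof is correct and follows the paper's argument step for step. The paper also splits frequencies at $|\xi|^2=m/(1+t)$, bounds the low-frequency ball by inserting $|\xi|^{2\sigma}|\xi|^{-2\sigma}$ and using the uniform $\dot{H}^{-\sigma}$ bound, and integrates against the factor $(1+t)^m$ with $m>\sigma$.

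You are also right that $C_1$ must depend on $\|f(0)\|_{L^2}$ as well as $C_0$. Letting $t\to 0$ in the conclusion forces $\|f(0)\|_{L^2}\le C_1$, and $\|f(0)\|_{L^2}$ is not controlled by the $\dot{H}^{-\sigma}$ bound. The paper's proof carries this same dependence implicitly when it integrates in time.

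One wording slip: your radius $\rho(t)^2=m/(1+t)$ is decreasing, not increasing, though nothing in the argument uses monotonicity.
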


\begin{proof}
Using the Plancherel's theorem, 
\[
\begin{aligned}
\frac{d}{dt}\|f(t)\|_{L^{2}}^2 & \leq - \int |\xi|^{2} \left|\widehat f(t,\xi)\right|^2  d\xi \leq -\int_{\{|\xi|^{2} > \frac{m}{1+t} \}} |\xi|^{2} \left|\widehat f(t,\xi)\right|^2 d\xi \\
&\leq -\frac{m}{1+t} \int_{\{|\xi|^{2} > \frac{m}{1+t}\}}  \left|\widehat f(t,\xi)\right|^2 d\xi = -\frac{m}{1+t}\|f(t)\|_{L^{2}}^2 + \frac{m}{1+t}\int_{\{|\xi|^{2} \le \frac{m}{1+t}\}}  \left|\widehat f(t,\xi)\right|^2  d\xi,
\end{aligned}
\]
where $m>0$ will be determined below. From this, we obtain 
\[
\frac{d}{dt}\|f(t)\|_{L^{2}}^2 +\frac{m}{1+t}\|f(t)\|_{L^{2}}^2 \leq   \frac{m}{1+t}\int_{\{|\xi|^{2} \le \frac{m}{1+t}\}}  \left|\widehat f(t,\xi)\right|^2  d\xi
\]
which implies 
\[
\begin{split}
\frac{d}{dt}\left[(1+t)^m\|f(t)\|_{L^{2}}^2  \right] &\leq m(1+t)^{m-1}\int_{\{|\xi|^{2} \le \frac{m}{1+t}\}} |\xi|^{2\sigma} |\xi|^{-2\sigma}\left|\widehat f(t,\xi)\right|^2  d\xi\\
&\leq C(1+t)^{m-1-\sigma} \int_{\{|\xi|^{2} \le \frac{m}{1+t}\}}  |\xi|^{-2\sigma}\left|\widehat f(t,\xi)\right|^2  d\xi \\
&\leq C(1+t)^{m-1-\sigma}\sup_{t>0}\|f\|^2_{\dot{H}^{-\sigma}} \leq C_{1}(1+t)^{m-1-\sigma}.
\end{split}
\]
By choosing $\sigma<m$ and by integrating the above inequality in time, (\ref{Decay by FS}) is achieved.
\end{proof}

%We now establish the temporal decay rates of $v$ for initial data $v_0 \in H^3 \cap \dot{H}^{-\sigma}$.

\begin{theorem} \label{L2 Decay theorem 2}\upshape
Let $v_{0} \in H^{3} \cap \dot{H}^{-\sigma}$ with $\dv v_{0}=0$, $\omega_0\leq0$ and $0<\sigma<2$. Then, $v$ satisfies  the following bounds for all $t>0$:
\begin{subequations} \label{Negative space bounds}
\begin{align}
&\left\|v(t) \right\|^{2}_{L^{2}\cap\dot{H}^{-\sigma}}+ \int^{t}_{0}\left\|v(\tau) \right\|^{2}_{\dot{H}^{1} \cap \dot{H}^{1-\sigma}}d\tau \leq C_{0}, \label{Negative space bounds a}\\
& \|v(t)\|_{L^{2}} \leq C_{1}(1+t)^{-\frac{\sigma}{2}} \label{Negative space bounds b},
\end{align}
\end{subequations}
where $C_{0}$ and $C_1$ only depend on $v_{0} \in L^{2} \cap \dot{H}^{-\sigma}$.
\end{theorem}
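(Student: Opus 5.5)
The plan is to prove (\ref{Negative space bounds a}) by a direct energy estimate, and then obtain (\ref{Negative space bounds b}) from Lemma \ref{FS Lemma}.

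\textbf{The $L^2$ part.} Since $\omega_0\le 0$, Lemma \ref{Max Principle Lemma} applied to (\ref{Eq of omega}) gives $\omega\le 0$ for all time. Here we use that $-[\sin\alpha]|\nabla v|^2\le 0$ and that $\mathcal{R}_{-\alpha}(v)\in L^1(0,T;W^{1,\infty})$ by Theorem \ref{GWP theorem}. The energy identity then yields (\ref{Energy inequality}), hence
\[
\|v(t)\|^2_{L^2}+2\int_0^t\|\nabla v\|^2_{L^2}\,d\tau\le\|v_0\|^2_{L^2}.
\]

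\textbf{The $\dot H^{-\sigma}$ part.} Apply $\Lambda^{-\sigma}$ to (\ref{Vis Eq of v}) and pair with $\Lambda^{-\sigma}v$. Since $\mathbb{P}$ is bounded on $\dot H^{s}$, we obtain
\[
\frac12\frac{d}{dt}\|v\|^2_{\dot H^{-\sigma}}+\|v\|^2_{\dot H^{1-\sigma}}\le \|\Lambda^{-\sigma}\mathbb{P}(\mathcal{R}_{-\alpha}(v)\cdot\nabla v)\|_{L^2}\,\|v\|_{\dot H^{-\sigma}}.
\]
Estimating the nonlinear term is the main obstacle. I would use the divergence form (\ref{div form of advection}): each component is $\partial_j(v_iv_k)$, so
\[
\|\Lambda^{-\sigma}\mathbb{P}(\mathcal{R}_{-\alpha}(v)\cdot\nabla v)\|_{L^2}\le C\|v\otimes v\|_{\dot H^{1-\sigma}}.
\]
The estimate then splits by the size of $\sigma$.
\begin{itemize}
\item For $1\le \sigma<2$, we have $-1<1-\sigma\le 0$. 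By (\ref{HLS}) with $\sigma$ replaced by $\sigma-1$, $\|v\otimes v\|_{\dot H^{1-\sigma}}\le C\|v\otimes v\|_{L^p}$ with $\frac1p=\frac{\sigma}{2}$, i.e.\ $p=2/\sigma\in(1,2]$. Hölder and interpolation give $\|v\|_{L^{2p}}^2\le C\|v\|_{L^2}^{2-(1-1/p)\cdot 2}\|\nabla v\|_{L^2}^{2(1-1/p)}$, i.e.\ $\|v\|_{L^2}^{\sigma}\|\nabla v\|_{L^2}^{2-\sigma}$. At $\sigma=1$ this is just $\|v\|_{L^4}^2\le C\|v\|_{L^2}\|\nabla v\|_{L^2}$ by (\ref{L4 bound}).
\item For $0<\sigma<1$, the index $1-\sigma$ is positive. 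Use (\ref{Product estimates 2}) to bound $\|v\otimes v\|_{\dot H^{1-\sigma}}\le C\|v\|_{L^{q}}\|\Lambda^{1-\sigma}v\|_{L^{r}}$ with suitable exponents, and interpolate everything between $\|v\|_{L^2}$ and $\|\nabla v\|_{L^2}$. For instance, $\|v\|_{L^{2/\sigma}}\|\Lambda^{1-\sigma}v\|_{L^{2/(1-\sigma)}}\le C\|v\|_{\dot H^{1-\sigma}}\|v\|_{\dot H^{1}}$. The bound is then $\|v\|_{\dot H^{1-\sigma}}\|\nabla v\|_{L^2}\le C\|v\|_{L^2}^{\sigma}\|\nabla v\|_{L^2}^{2-\sigma}$.
\end{itemize}
In both cases the key property is that the total power of $\|\nabla v\|_{L^2}$ is at most $2$, and it is exactly $2$ after using $\|v\|_{L^2}\le\|v_0\|_{L^2}$. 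Writing $Y(t)=\|v(t)\|_{\dot H^{-\sigma}}$, this gives
\[
\frac{d}{dt}Y^2\le C\|v_0\|^{\sigma}_{L^2}\|\nabla v\|_{L^2}^{2-\sigma}\,Y .
\]
So $\frac{d}{dt}Y\le C\|\nabla v\|_{L^2}^{2-\sigma}$, or, when $2-\sigma>0$ strictly, a Young/Hölder-in-time variant $\frac{d}{dt}Y^2\le C\|\nabla v\|_{L^2}^2(1+Y^2)$. Gronwall with $\int_0^\infty\|\nabla v\|^2_{L^2}\le\|v_0\|^2_{L^2}/2$ then gives
\[
\sup_t\|v\|^2_{\dot H^{-\sigma}}\le (\|v_0\|^2_{\dot H^{-\sigma}}+C)\exp\!\left(C\|v_0\|^2_{L^2}\right).
\]
Integrating the differential inequality also bounds $\int_0^t\|v\|^2_{\dot H^{1-\sigma}}\,d\tau$, which completes (\ref{Negative space bounds a}) with $C_0$ depending only on $\|v_0\|_{L^2\cap\dot H^{-\sigma}}$.

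\textbf{Decay.} With (\ref{Energy inequality}) and the uniform $\dot H^{-\sigma}$ bound in hand, Lemma \ref{FS Lemma} applied to $f=v$ yields (\ref{Negative space bounds b}).

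I expect the only delicate point to be the product estimate in the range $0<\sigma<1$, where $\dot H^{1-\sigma}$ has positive order. If the Kato–Ponce route gets messy, I would instead use the Hardy–Littlewood–Sobolev form on $\Lambda^{-\sigma}$ of $\mathcal{R}_{-\alpha}(v)\cdot\nabla v$ directly: (\ref{HLS}) gives $\|\Lambda^{-\sigma}(\mathcal{R}_{-\alpha}(v)\cdot\nabla v)\|_{L^2}\le C\|v\cdot\nabla v\|_{L^{p}}\le C\|v\|_{L^{2/\sigma}}\|\nabla v\|_{L^2}$. Then $\|v\|_{L^{2/\sigma}}\le C\|v\|_{L^2}^{\sigma}\|\nabla v\|_{L^2}^{1-\sigma}$ gives the same structure.
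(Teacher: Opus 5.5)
Your estimates of the nonlinear term are fine, but the step that closes the estimate fails, so there is a genuine gap. You drop the dissipation $\|v\|^2_{\dot H^{1-\sigma}}$ and bound $\|v\|_{L^2}$ by $\|v_0\|_{L^2}$. What remains is $\frac{d}{dt}Y\le C\|\nabla v\|_{L^2}^{2-\sigma}$, and the power of $\|\nabla v\|_{L^2}$ there is $2-\sigma<2$, not $2$ as you assert. The energy inequality only controls $\int_0^\infty\|\nabla v\|^2_{L^2}\,d\tau$. H\"older in time then gives only
\[
\int_0^t\|\nabla v\|^{2-\sigma}_{L^2}\,d\tau\le t^{\sigma/2}\Bigl(\int_0^t\|\nabla v\|^2_{L^2}\,d\tau\Bigr)^{1-\sigma/2},
\]
so $Y(t)\lesssim 1+t^{\sigma/2}$, which is useless for Lemma \ref{FS Lemma}. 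Your ``Young/H\"older-in-time variant'' $\frac{d}{dt}Y^2\le C\|\nabla v\|^2_{L^2}(1+Y^2)$ does not follow from $\frac{d}{dt}Y^2\le C\|\nabla v\|^{2-\sigma}_{L^2}Y$. Take $Y=1$ and $\|\nabla v\|_{L^2}=\epsilon\to0$: then $\epsilon^{2-\sigma}$ is not bounded by $C\epsilon^2$. The same objection applies to your claimed bound on $\int_0^t\|v\|^2_{\dot H^{1-\sigma}}\,d\tau$. The missing ingredient is a second time-integrable quantity. Freezing $\|v\|_{L^2}^\sigma$ at its initial value throws away exactly the integrability you need.

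The paper supplies this integrability by first proving the $\dot H^{-1}$ bound (\ref{Bound of v 1 dd}). There the dissipation is exactly $\|v\|_{L^2}^2$, and it absorbs $C\|v\|_{L^2}\|\nabla v\|_{L^2}\|v\|_{\dot H^{-1}}$ via Young. This yields $\int_0^\infty\|v\|^2_{L^2}\,d\tau<\infty$, so $\|v\|^2_{L^2}+\|\nabla v\|^2_{L^2}$ becomes an integrable Gronwall weight for $1<\sigma<2$.

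You can also repair your own route by keeping the dissipation instead of interpolating it away. For $0<\sigma<1$ you already had $\|v\otimes v\|_{\dot H^{1-\sigma}}\le C\|v\|_{\dot H^{1-\sigma}}\|\nabla v\|_{L^2}$ before converting $\|v\|_{\dot H^{1-\sigma}}$ into powers of $\|v\|_{L^2}$ and $\|\nabla v\|_{L^2}$. For $1\le\sigma<2$, Sobolev embedding and Fourier-side Cauchy--Schwarz give $\|v\|^2_{L^{4/\sigma}}\le C\|v\|^2_{\dot H^{1-\sigma/2}}\le C\|v\|_{\dot H^{1-\sigma}}\|\nabla v\|_{L^2}$. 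In every case Young's inequality then yields
\[
\frac{d}{dt}Y^2+\|v\|^2_{\dot H^{1-\sigma}}\le C\|\nabla v\|^2_{L^2}Y^2 .
\]
This closes with $\int_0^\infty\|\nabla v\|^2_{L^2}\,d\tau\le\frac12\|v_0\|^2_{L^2}$. It gives (\ref{Negative space bounds a}) for all $0<\sigma<2$ at once, without the intermediate $\dot H^{-1}$ step.
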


\begin{proof}
Since 
\[
\mathcal{R}_{-\alpha}(v)\cdot \nabla v=\mathbb{D}_{\alpha}(v,v),
\]
where $\mathbb{D}_{\alpha}(v,v)$ is a linear combination of the derivatives of $v_{i}v_{j}$, $i,j=1,2$ as noted in (\ref{div form of advection}), the range of $\sigma$ is increased from $0<\sigma<1$ to $0<\sigma<2$ in Theorem \ref{L2 Decay theorem 2}. Moreover, since  
\[
\|v\|_{\dot{H}^{-\sigma_{1}}}\leq C\|v\|^{\epsilon}_{L^{2}}\|v\|^{1-\epsilon}_{\dot{H}^{-\sigma_{2}}}, \quad 0<\sigma_{1}<\sigma_{2}, \ \epsilon=1-\frac{\sigma_{1}}{\sigma_{2}} \in (0,1),
\]
we are able to use bounds of $v$ in $L^{2}\cap \dot{H}^{-\sigma'}$ with $\sigma'<\sigma$ to deal with $v\in L^{2}\cap \dot{H}^{-\sigma}$. By Lemma \ref{FS Lemma}, it is enough to derive (\ref{Negative space bounds a}) to show  (\ref{Negative space bounds b}). To this end, we multiply (\ref{Vis Eq of v}) by $\Lambda^{-2}v$ and integrate over $\mathbb{R}^2$ to obtain 
\[
\begin{split}
\frac{1}{2}\frac{d}{dt} \left\|v\right\|^{2}_{{\dot{H}^{-1}}}+ \left\|v \right\|^{2}_{L^{2}}&=-\int \Lambda^{-1}\mathbb{D}_{\alpha}(v,v)\cdot \Lambda^{-1}v \leq C\sum^{2}_{i,j=1}\left\|v_{i}v_{j}\right\|_{L^{2}}\left\|\Lambda^{-1}v \right\|_{L^2}\\
&\leq C \left\|v\right\|^2_{L^{4}}\left\|v\right\|_{{\dot{H}^{-1}}} \leq C\left\|v\right\|_{L^{2}} \left\|\nabla v\right\|_{L^{2}}\left\|v\right\|_{{\dot{H}^{-1}}} \leq C \left\|\nabla v\right\|^2_{L^2}\left\|v\right\|^{2}_{{\dot{H}^{-1}}}+\frac{1}{2} \left\|v\right\|^2_{L^2}
\end{split}
\]
and thus we derive
\[
\frac{d}{dt} \left\|v\right\|^{2}_{{\dot{H}^{-1}}}+ \left\|v \right\|^{2}_{L^{2}}\leq C \left\|\nabla v\right\|^2_{L^2}\left\|v\right\|^{2}_{{\dot{H}^{-1}}}.
\]
By  (\ref{Energy inequality}) and Gr\"ownwall's inequality, we derive the following bound:
\eqn \label{Bound of v 1 dd}
\left\|v(t)\right\|^{2}_{{\dot{H}^{-1}}}+\int^{t}_{0} \left\|v(\tau) \right\|^{2}_{L^{2}}d\tau \leq \left\|v_{0}\right\|^{2}_{{\dot{H}^{-1}}}\exp\left[C\left\|v_{0} \right\|^{2}_{L^{2}}\right]=C_{1}.
\een

We now take $1<\sigma<2$: by (\ref{HLS}),
\[
\begin{split}
\frac{1}{2}\frac{d}{dt}\left\|v\right\|^{2}_{{\dot{H}^{-\sigma}}}+\left\|v\right\|^{2}_{{\dot{H}^{1-\sigma}}}&=-\int \Lambda^{-\sigma}\mathbb{D}_{\alpha}(v,v)\cdot \Lambda^{-\sigma}v \leq C\sum^{2}_{i,j=1}\left\|\Lambda^{1-\sigma}(v_{i}v_{j})\right\|_{L^2}\left\|v\right\|_{{\dot{H}^{-\sigma}}}\\
&\leq C\sum^{2}_{i,j=1}\left\|v_{i}v_{j}\right\|_{L^\frac{2}{\sigma}}\left\|v\right\|_{{\dot{H}^{-\sigma}}}\leq C\left\|v\right\|^2_{L^\frac{4}{\sigma}}\left\|v\right\|_{{\dot{H}^{-\sigma}}}.
\end{split}
\]
Since $2<\frac{4}{\sigma}<4$, we have
\[
\left\|v\right\|^2_{L^\frac{4}{\sigma}}\leq C \left(\left\|v\right\|^2_{L^2}+\left\|\nabla v\right\|^2_{L^2}\right)
\]
by (\ref{L4 bound}) and thus we obtain 
\[
\frac{1}{2}\frac{d}{dt}\left\|v\right\|^{2}_{{\dot{H}^{-\sigma}}}+\left\|v\right\|^{2}_{{\dot{H}^{1-\sigma}}}\leq C \left(\left\|v\right\|^2_{L^2}+\left\|\nabla v\right\|^2_{L^2}\right)\left\|v\right\|^2_{{\dot{H}^{-\sigma}}}+C \left(\left\|v\right\|^2_{L^2}+\left\|\nabla v\right\|^2_{L^2}\right).
\]
So, we can use (\ref{Energy inequality}), (\ref{Bound of v 1 dd}), and Gr\"ownwall's inequality to obtain
\begin{equation} \label{Bound of v 2 dd}
\left\|v(t)\right\|^{2}_{{\dot{H}^{-\sigma}}}+\int^{t}_{0} \left\|v(\tau)\right\|^{2}_{{\dot{H}^{1-\sigma}}}d\tau \leq \left(\left\|v_{0}\right\|^{2}_{{\dot{H}^{-\sigma}}}+C\left\|v_{0}\right\|^{2}_{L^{2}}+CC_{1}\right)\exp\left[C\left(\left\|v_{0} \right\|^{2}_{L^{2}}+\mathcal{D}_{1}\right)\right]=C_{2}.
\end{equation}

By (\ref{Energy inequality}), (\ref{Bound of v 1 dd}), and (\ref{Bound of v 2 dd}), we have
\eqn \label{Total bound of v dd}
\left\|v(t) \right\|^{2}_{L^{2}\cap\dot{H}^{-\sigma}}+ \int^{t}_{0}\left\|v(\tau) \right\|^{2}_{\dot{H}^{1} \cap \dot{H}^{1-\sigma}}d\tau \leq \left\|v_{0}\right\|^{2}_{L^{2}}+C_{1}+C_{2}=C_{0}.
\een
This completes the proof of Theorem \ref{L2 Decay theorem 2}.
\end{proof}

Following the direction of Corollary \ref{Higher order decay}, we also obtain the decay rates for the derivatives of $v$.

\begin{corollary} \upshape \label{Higher order decay 2}
Let $v_{0} \in H^{3} \cap \dot{H}^{-\sigma}$ with $\dv v_{0}=0,$ $\omega_0\leq0$ and $0<\sigma<2$. Then, there exists $T_{0}$ depending on $v_{0}\in L^{2}\cap \dot{H}^{-\sigma}$ such that $v$ decays in time as follows: 
\[
\left\|v(t)\right\|_{\dot{H}^k}\leq \frac{C_{0}}{(1+t)^\frac{k+\sigma}{2}}, \quad k=1,2,3
\]
for all $t>T_{0}$, where $C_{0}$ only depends on $v_0\in H^{3} \cap \dot{H}^{-\sigma}$.
\end{corollary}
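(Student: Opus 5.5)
The argument follows the proof of Corollary \ref{Higher order decay}, with Theorem \ref{L2 Decay theorem 2} replacing Theorem \ref{L2 Decay theorem} as the source of the base $L^2$ rate. First apply $D^k$, $k=1,2,3$, to (\ref{Vis Eq of v}) and test with $D^k v$. Lemma \ref{Commutator lemma}, (\ref{div curl}), (\ref{L4 bound}) and (\ref{Interpolation}) then give the differential inequality already used in the proof of Corollary \ref{Higher order decay}:
\[
\frac{1}{2}\frac{d}{dt}\|v\|^2_{\dot{H}^k}+\|\nabla v\|^2_{\dot{H}^k}\leq C\|v\|_{L^2}\|\nabla v\|^2_{\dot{H}^k}.
\]
This step is independent of the data class. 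By (\ref{Negative space bounds b}), $\|v(t)\|_{L^2}\leq C_1(1+t)^{-\sigma/2}\to 0$, so we can choose $T_0$, depending only on $v_0\in L^2\cap\dot H^{-\sigma}$, with $C\|v(t)\|_{L^2}\leq \frac12$ for $t\geq T_0$. For these times,
\[
\frac{d}{dt}\|v\|^2_{\dot{H}^k}+\|\nabla v\|^2_{\dot{H}^k}\leq 0 .
\]

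Next convert the dissipation into a nonlinear damping term. Apply (\ref{Interpolation}) between $\dot H^{-\sigma}$ and $\dot H^{k+1}$ (valid for fractional indices in Fourier variables):
\[
\|v\|_{\dot H^k}\leq C\|v\|_{\dot H^{-\sigma}}^{\frac{1}{k+1+\sigma}}\|v\|_{\dot H^{k+1}}^{\frac{k+\sigma}{k+1+\sigma}}.
\]
Since $\sup_t\|v\|_{\dot H^{-\sigma}}\leq C_0$ by (\ref{Negative space bounds a}), this yields
\[
\|\nabla v\|^2_{\dot H^k}\geq c\,\|v\|_{\dot H^k}^{2\frac{k+1+\sigma}{k+\sigma}}.
\]
Set $y=\|v\|^2_{\dot H^k}$. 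Then $y'+c\,y^{1+\frac{1}{k+\sigma}}\leq 0$ for $t\geq T_0$. Integrating gives $y(t)^{-1/(k+\sigma)}\geq y(T_0)^{-1/(k+\sigma)}+c'(t-T_0)$, i.e. $y(t)\leq C(t-T_0)^{-(k+\sigma)}$ for $t>T_0$. With $y(T_0)$ bounded by Theorem \ref{GWP theorem} (or Remark \ref{Remark 4.1}), this becomes $\|v(t)\|_{\dot H^k}\leq C_0(1+t)^{-\frac{k+\sigma}{2}}$ once $t\geq 2T_0$, say. Enlarging $T_0$ if needed gives the statement.

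Alternatively, for a closer analogy with Corollary \ref{Higher order decay}, one can interpolate between $L^2$ and $\dot H^{k+1}$ and insert the decaying factor $\|v\|_{L^2}\lesssim(1+t)^{-\sigma/2}$. Then solving the Bernoulli-type inequality gives the same rate.

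The main obstacle is the interpolation step with the negative-index endpoint. One must check that (\ref{Interpolation}) extends to $s_0=-\sigma<0$; it does, by Hölder's inequality on $|\xi|^{2s}|\hat v|^2$. One must also check that the resulting ODE exponents produce exactly $(k+\sigma)/2$ rather than a weaker power. I would do the exponent bookkeeping carefully in the $\dot H^{-\sigma}$ version, since it avoids time weights entirely. Uniformity of $C_0$ in the initial data follows from the constants in Theorem \ref{L2 Decay theorem 2} and the global $H^3$ bound.
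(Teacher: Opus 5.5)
Your argument is correct. Its skeleton matches what the paper intends. The paper gives no separate proof and only points to the proof of Corollary \ref{Higher order decay}: an $\dot H^k$ energy estimate, absorbing the nonlinear term once $C\|v(t)\|_{L^2}\le\frac12$, then integrating a Bernoulli-type inequality.

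Where you differ is in how you close the ODE. The paper's route is your ``alternative'': interpolate $\dot H^k$ between $L^2$ and $\dot H^{k+1}$ and insert the rate (\ref{Negative space bounds b}). This gives
\[
\frac{d}{dt}\|v\|^2_{\dot H^k}+c\,(1+t)^{\frac{\sigma}{k}}\|v\|^{2\frac{k+1}{k}}_{\dot H^k}\le 0 ,
\]
and integrating $\frac{d}{dt}\|v\|^{-2/k}_{\dot H^k}\ge c'(1+t)^{\sigma/k}$ yields $\|v\|^2_{\dot H^k}\lesssim(1+t)^{-(k+\sigma)}$.

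Your main route instead interpolates against the uniform $\dot H^{-\sigma}$ bound (\ref{Negative space bounds a}). The resulting inequality $y'+c\,y^{1+\frac{1}{k+\sigma}}\le 0$ is autonomous, and the exponent $k+\sigma$ is read off directly with no time weights. Your exponent bookkeeping is right, and so is the Fourier-side H\"older justification of the negative endpoint.

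The trade-off is in what each route needs as input. You use the uniform negative-norm bound itself, and (\ref{Negative space bounds b}) only to fix $T_0$. The paper's route needs nothing but an $L^2$ decay rate. That is why a single computation serves both Corollary \ref{Higher order decay}, whose input is the $L^1$-based rate of Theorem \ref{L2 Decay theorem}, and the present statement.

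In either version the term $y(T_0)^{-1/(\cdot)}$ can simply be dropped when integrating. So the global $H^3$ bound is needed only on the window just after $T_0$, or not at all if you enlarge $T_0$ to $\max\{2T_0,1\}$ as you do.
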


\begin{remark}\upshape[Decay rates of $v$ for initial vorticity with mixed signs]
In order to prove Theorem \ref{L2 Decay theorem 2}, the sign condition is crucially used to derive (\ref{Energy inequality}). We now do not impose the sign condition to vorticity. By (\ref{L2 bound of u with vorticity}) and Lemma \ref{Decay lemma of omega},
\[
\frac{1}{2}\frac{d}{dt}\|v\|^2_{L^2}+\|\nabla v\|^2_{L^2}\leq  \frac{[\sin\alpha]\|\omega_0\|_{L^\infty}}{(2+[\sin\alpha]\|\omega_0\|_{L^\infty}t)}\|v\|^2_{L^2}.
\]
Using this, we are not able to derive the temporal decay rate of $\|v(t)\|_{L^2}$. Therefore, certain conditions may be required to derive the temporal decay rate of $\|v(t)\|_{L^2}$. To handle this issue, we introduce two approaches.

\vspace{1ex}

\noindent
$\blacktriangleright$ Instead of (\ref{L2 bound of u with vorticity}), we can bound $v$ as follows:
\[
\frac{1}{2}\frac{d}{dt}\|v\|^2_{L^2}+\|\nabla v\|^2_{L^2}\leq C \left\|\omega\right\|_{L^{2}}\|v\|^2_{L^4}\leq C \|v\|_{L^2}\|\nabla v\|^2_{L^2}.
\]
So, if $\|v_{0}\|_{L^2}$ is sufficiently small, 
\[
\frac{d}{dt}\|v\|^2_{L^2}+\|\nabla v\|^2_{L^2}\leq 0.
\]
With this inequality in hand, we can prove Theorem \ref{L2 Decay theorem 2}.

\vspace{1ex}

\noindent
$\blacktriangleright$ We can also bound $v$ as follows: 
\eqn \label{Not good bound}
\frac{d}{dt}\|v\|^2_{L^2}+\|\nabla v\|^2_{L^2} \leq \frac{C}{(1+t)}\|\omega^+\|^\frac{1}{3} _{L^1}\|v\|^2_{L^2}.
\een
If we are able to show 
\[
\|\omega^+(t)\|_{L^1}\leq \frac{C}{(1+t)^{3\beta}}
\]
for some $\beta>0$, (\ref{Not good bound}) is replaced with  
\[
\frac{d}{dt}\|v\|^2_{L^2}+\|\nabla v\|^2_{L^2} \leq \frac{C}{(1+t)^{1+\beta}}\|v\|^2_{L^2}.
\]
By Gr\"onwall's inequality, 
\eqn \label{New L2 bound of v beta}
\|v(t)\|^2_{L^2}+\int^{t}_{0}\|\nabla v(\tau)\|^2_{L^2}d\tau \leq \|v_{0}\|^2_{L^2}\exp\left[C\right]\leq C\|v_{0}\|^2_{L^2}
\een
which in turn implies 
\[
\frac{d}{dt}\|v\|^2_{L^2}+\|\nabla v\|^2_{L^2} \leq \frac{C}{(1+t)^{1+\beta}}.
\]
By modifying Lemma \ref{FS Lemma}, 
\[
\|v(t)\|_{L^{2}} \leq  C(1+t)^{-\frac{\sigma}{2}}+C(1+t)^{-\frac{\beta}{2}} \leq C(1+t)^{-\frac{1}{2}\min\{\sigma, \beta\}}.
\]
\end{remark}

%%%%%%%%%%%%%%%
\section*{Acknowledgments}
%%%%%%%%%%%%%%%
H.B. would like to thank the Department of Mathematics at UC Riverside for their hospitality during the period this work was conducted. H.B. is supported by the NRF grant No. RS-2024-00341870. K.K. is supported by the NRF grants Nos. RS-2024-00336346 and RS-2024-00406821. W.L. is supported by the NRF grant No. RS-2024-00453801.

\end{document}